\numberwithin{equation}{section}
\DeclareDocumentCommand{\shortexact}{s O{} O{} mmmm}{
\IfBooleanTF{#1}{ 
\begin{tikzcd}[ampersand replacement=\&]
        {1} \& {#4} \& {#5} \& {#6} \& {1#7}
        \arrow[from=1-1, to=1-2]
        \arrow["#2", from=1-2, to=1-3]
        \arrow["#3", from=1-3, to=1-4]
        \arrow[from=1-4, to=1-5]
\end{tikzcd}
}{ 
\begin{tikzcd}[ampersand replacement=\&]
        {0} \& {#4} \& {#5} \& {#6} \& {0#7}
        \arrow[from=1-1, to=1-2]
        \arrow["#2", from=1-2, to=1-3]
        \arrow["#3", from=1-3, to=1-4]
        \arrow[from=1-4, to=1-5]
\end{tikzcd}
}}
\newcommand{\Q}{\mathbb Q}
\newcommand{\R}{\mathbb R}
\newcommand{\C}{\mathbb C}
\newcommand{\Z}{\mathbb Z}
\newcommand{\RP}{\mathbb{RP}}
\newcommand{\HP}{\mathbb{HP}}
\renewcommand{\O}{\mathrm{O}}
\newcommand{\SO}{\mathrm{SO}}
\newcommand{\Spin}{\mathrm{Spin}}
\newcommand{\Pin}{\mathrm{Pin}}
\newcommand{\U}{\mathrm{U}}
\newcommand{\SU}{\mathrm{SU}}
\newcommand{\Spinc}{\relax\ifmmode{\Spin^c}\else Spin\textsuperscript{$c$}\xspace\fi}
\newcommand{\Spinh}{\relax\ifmmode{\Spin^h}\else Spin\textsuperscript{$h$}\xspace\fi}
\newcommand{\spinc}{spin\textsuperscript{$c$}\xspace}
\newcommand{\Pinc}{\relax\ifmmode{\Pin^c}\else Pin\textsuperscript{$c$}\xspace\fi}
\newcommand{\pinc}{pin\textsuperscript{$c$}\xspace}
\newcommand{\Pinp}{\relax\ifmmode{\Pin^+}\else Pin\textsuperscript{$+$}\xspace\fi}
\newcommand{\pinp}{pin\textsuperscript{$+$}\xspace}
\newcommand{\Pinm}{\relax\ifmmode{\Pin^-}\else Pin\textsuperscript{$-$}\xspace\fi}
\newcommand{\pinm}{pin\textsuperscript{$-$}\xspace}
\newcommand{\spinh}{spin\textsuperscript{$h$}\xspace}
\newcommand{\pinhm}{pin\textsuperscript{$h-$}\xspace}
\newcommand{\sm}{\mathrm{sm}}
\newtheorem{thm}[equation]{Theorem}
\newtheorem{lem}[equation]{Lemma}
\newtheorem{cor}[equation]{Corollary}
\newtheorem*{main_thm_ref}{\cref{main_thm}}
\theoremstyle{definition}
\newtheorem{exm}[equation]{Example}
\newtheorem{defn}[equation]{Definition}
\newtheorem{constr}[equation]{Construction}
\newtheorem{ques}[equation]{Question}
\theoremstyle{remark}
\newtheorem{rem}[equation]{Remark}
\crefname{thm}{Theorem}{Theorems}
\crefname{lem}{Lemma}{Lemmas}
\crefname{cor}{Corollary}{Corollaries}
\crefname{prop}{Proposition}{Propositions}
\crefname{ex}{Exercise}{Exercises}
\crefname{exm}{Example}{Examples}
\crefname{defn}{Definition}{Definitions}
\crefname{constr}{Construction}{Constructions}
\crefname{claim}{Claim}{Claims}
\crefname{rem}{Remark}{Remarks}
\crefname{fct}{Fact}{Facts}
\crefname{note}{Note}{Notes}
\newcommand{\term}{\emph}
\newcommand{\vp}{\varphi}
\newcommand{\inj}{\hookrightarrow}
\newcommand{\pt}{\mathrm{pt}}
\let\shortmapsto\mapsto
\renewcommand{\mapsto}{\mathchoice{\longmapsto}{\shortmapsto}{\shortmapsto}{\shortmapsto}}
\DeclarePairedDelimiter\paren{(}{)}
\DeclarePairedDelimiter\ang{\langle}{\rangle}
\DeclarePairedDelimiter\abs{\lvert}{\rvert}
\DeclarePairedDelimiter\bkt{[}{]}
\DeclarePairedDelimiter\set{\{}{\}}
\let\oldparen\paren
\def\paren{\@ifstar{\oldparen}{\oldparen*}}
\let\oldbkt\bkt
\def\bkt{\@ifstar{\oldbkt}{\oldbkt*}}
\newcommand{\newoperator}[1]{\expandafter\DeclareMathOperator\csname #1\endcsname{\operatorname{#1}}}
\newcommand{\NewThomSpectrum}[1]{\expandafter\newcommand\csname M#1\endcsname{\mathit{M#1}}}
\newcommand{\NewMTSpectrum}[1]{\expandafter\newcommand\csname MT#1\endcsname{\mathit{MT#1}}}
\newcommand{\BothThomSpectra}[1]{\NewThomSpectrum{#1}\NewMTSpectrum{#1}}
\newcommand{\GL}{\mathrm{GL}}
\title{Smith homomorphisms and \texorpdfstring{$\Spin^{\MakeLowercase{h}}$}{Spinh} structures}
\author{Arun Debray}
\address{Department of Mathematics, Purdue University, 150 N. University Street, West Lafayette, IN 47907}
\email{\href{mailto:a.debray@gmail.com}{a.debray@gmail.com}}
\urladdr{\href{https://adebray.github.io/}{https://adebray.github.io/}}
\author{Cameron Krulewski}
\address{Department of Mathematics, Massachusetts Institute of Technology,
77 Massachusetts Avenue,
\indent Cambridge, MA 02139}
\email{\href{mailto:camkru@mit.edu}{camkru@mit.edu}}
\urladdr{\href{https://cakrulewski.github.io}{https://cakrulewski.github.io}}
\date{\today}
\begin{document}

%

\begin{abstract}
%
%
In this article, we answer two questions of Buchanan-McKean~\cite{BM23} about bordism for manifolds with \spinh structures: we establish a Smith isomorphism between the reduced \spinh bordism of $\RP^\infty$ and \pinhm bordism, and we provide a geometric explanation for the isomorphism $\Omega_{4k}^{\Spin^c}\otimes\Z[1/2] \cong\Omega_{4k}^{\Spin^h}\otimes\Z[1/2]$. Our proofs use the general theory of twisted spin structures and Smith homomorphisms that we developed in~\cite{DDKLPTT24} joint with Devalapurkar, Liu, Pacheco-Tallaj, and Thorngren,
specifically that the Smith homomorphism participates in a long exact sequence with explicit, computable terms.
%
%
%
%
%
\end{abstract}

\maketitle

\tableofcontents

\section{Introduction}
At the start of the 1960s, C.T.C.\ Wall challenged the readers of~\cite{Wal60} to study the bordism groups of spin manifolds---and by the end of the decade, Anderson-Brown-Peterson~\cite{anderson_structure_1967} had completely solved this problem, determining not just the spin bordism groups but also a convenient decomposition of the spectrum $MT\Spin$ itself, catalyzing computations of other, related bordism groups.

One such example is bordism for a complex analogue of spin structures, referred to as \spinc structures (see \cref{spinc_tw_spin}), which was solved almost immediately after Anderson-Brown-Peterson's work (see~\cite[Chapter XI]{Sto68}). Similarly, one can
replace the complex numbers with the quaternions, leading to the notion of a 
\term{\spinh structure}, i.e.\ a reduction of structure group to the group\footnote{Here and elsewhere in this article, the notation $G\times_{\set{\pm 1}} H$ indicates that there are central subgroups $\set{\pm 1}\subset G$, $\set{\pm 1}\subset H$ each isomorphic to the multiplicative group $\set{\pm 1}\subset\R^\times$; then $G\times_{\set{\pm 1}} H$ is the quotient of $G\times H$ by the diagonal $\set{\pm 1}$ subgroup. These subgroups of $G$ and $H$ will be clear from context.}
\begin{equation}
    \Spin_n^h\coloneqq \Spin_n\times_{\set{\pm 1}}\SU_2.
\end{equation}

Spin$^h$ structures have been studied in the mathematics and physics literature since the 1960s, with applications to quantum gravity~\cite{BFF78, Bec24}, index theory, e.g.\ in~\cite{May65, Nag95, Bar99, FH21, Che17}, Seiberg-Witten theory~\cite{OT96}, immersion problems~\cite{Bar99, AM20}, almost quaternionic geometry, e.g.\ in~\cite{Nag95, Bar99, AM20}, and invertible field theories~\cite{FH21, BC18, WWW19, WWZ19, WW20a, DL21, DY22, Ste22, WWW22, BI23, DDKLPTT23}. See~\cite{Law23} for a review of the mathematical aspects of \spinh structures.

However, \spinh \emph{bordism} has attracted interest only in the last few years, beginning with Freed-Hopkins' work~\cite{FH21} applying low-degree \spinh bordism groups to condensed-matter physics; other important results include obstruction theory for \spinh structures~\cite{AM20}, the construction of a quaternionic Atiyah-Bott-Shapiro map~\cite{FH21, Hu23} and an Anderson-Brown-Peterson-style splitting of the \spinh bordism spectrum at the prime $2$~\cite{Mil23}.\footnote{However, not everything carries over: just as the quaternions have less structure than $\R$ or $\C$, \spinh bordism has less structure than spin or \spinc bordism. For example, $\Omega_*^\Spin$ and $\Omega_*^{\Spin^c}$ have ring structures induced from the direct product of manifolds, but $\Omega_*^{\Spin^h}$ does not. Thus the twisted Atiyah-Bott-Shapiro map mentioned above is not a ring homomorphism.} 

Recently, Buchanan-McKean~\cite{BM23} proved a number of key results on \spinh bordism, including describing the above splitting in terms of characteristic classes and showing that a collection of characteristic classes valued in quaternionic $K$-theory detect a manifold's \spinh bordism class. Using this splitting, they give an algorithm for computing $\Omega_n^{\Spin^h}$ for all $n$ and analyze the asymptotics of the size of the $n^{\mathrm{th}}$ \spinh bordism group in $n$.


Buchanan-McKean also ask several questions on \spinh bordism~\cite[\S 10]{BM23} coming from their work. The main goal of this article is to answer two of these questions, which we now describe.

Anderson-Brown-Peterson~\cite{ABP69} established a \term{Smith isomorphism} $\sm_\sigma\colon \widetilde\Omega_n^\Spin(\RP^\infty)\overset\cong\to \Omega_{n-1}^{\Pin^-}$,\footnote{\Pinm structures are an unoriented generalization of spin structures that we discuss in \cref{pin_exm}.} described concretely by 
taking a spin manifold $M$ with a map $M\to \RP^\infty$ to the zero set of a generic section of the pullback of the tautological line bundle to $M$.
Then, Bahri-Gilkey~\cite{BG87a, BG87} constructed a completely analogous isomorphism $\sm_\sigma^c\colon \widetilde \Omega_n^{\Spin^c}(\RP^\infty)\xrightarrow{\cong} \Omega_{n-1}^{\Pin^c}$.
\begin{ques}[{Buchanan-McKean~\cite[Question 10.8]{BM23}}]
\label{spinh_smith_qn}
Let $\Pin_n^{h-}\coloneqq \Pin_n^-\times_{\set{\pm 1}} \SU_2$.\footnote{This group was first defined by Freed-Hopkins~\cite[(9.21)]{FH21}, who call it $G^-$.}
Is there a Smith isomorphism for \pinhm bordism?
\end{ques}
We affirmatively answer this question.
\begin{main_thm_ref}
    For all $n$, there is an isomorphism
    \begin{equation} \mathrm{sm}_\sigma^h\colon\widetilde\Omega_{n}^{\Spin^h}(\RP^\infty) \overset\cong\longrightarrow \Omega_{n-1}^{\Pin^{h-}}\end{equation}
    given by sending a pair $(M,f)$ of a \spinh manifold $M$ with a generic map $f\colon M\to\RP^\infty$
to the zero set of a generic section of the pullback of the tautological line bundle $\sigma\to\RP^\infty$ by $f$.
\end{main_thm_ref}
Part of this theorem is the assertion that such a zero set is generically a closed $(n-1)$-manifold with \pinhm structure.

The technique we use to prove \cref{main_thm} also enables us to solve another one of Buchanan-McKean's questions.
\begin{ques}[{Buchanan-McKean~\cite[Question 10.3]{BM23}}]\label{BM_2nd_qn}
For all $k\ge 0$, $\mathrm{rank}(\Omega_{4k}^{\Spin^c}) = \mathrm{rank}(\Omega_{4k}^{\Spin^h})$. Is there a geometric explanation for this fact? Is there a procedure to produce generators for the free summand of $\Omega_{4k}^{\Spin^h}$ from those of $\Omega_{4k}^{\Spin^c}$?
\end{ques}
To answer this question, we exhibit a map $p_*\colon\Omega_{n}^{\Spin^c}\to\Omega_{n}^{\Spin^h}$ induced from an inclusion $\Spin_n^c\hookrightarrow\Spin_n^h$. We show that $p_*$ is part of a long exact sequence of bordism groups whose third term is $\Omega_{n-3}^\Spin(B\SO_3)$~\eqref{spinc_h_smith}, and give geometric interpretations to the three maps of the long exact sequence in~\ref{LES_map_p}--\ref{final_LES}. Exactness yields a quick proof of the following theorem.

%
%

%
%
%

\newtheorem*{generator_cor_ref}{\textbf{\emph{\cref{generator_cor}}}}
\begin{generator_cor_ref}
    For all $k\ge 0$, the map
\begin{equation}
    p_*\colon \Omega_{4k}^\Spinc\otimes\Z[1/2] \longrightarrow \Omega_{4k}^\Spinh\otimes\Z[1/2],
\end{equation}
where $p_*$ is as above, is an isomorphism.
\end{generator_cor_ref}
This answers the first part of \cref{BM_2nd_qn}. Unfortunately, there is quite a bit of $2$-torsion in $\Omega_*^\Spin(B\SO_3)$, preventing us from lifting to $\Z$. This also suggests that answering the second part of Buchanan-McKean's question, building manifold generators of free summands of \spinh bordism from manifold generators of free summands of \spinc bordism, would be very difficult.

We use the same technique to prove both \cref{main_thm,generator_cor}: a method of easily producing geometrically-defined long exact sequences of bordism groups. The input is a virtual vector bundle $V$ and a vector bundle $W$ of ranks $r_V$, resp.\ $r_W$, both over a space $X$. From this data, there is a long exact sequence
\begin{equation}\label{introSmithLES}
    \dotsb \longrightarrow
    \Omega_n^{\Spin}(S(W)^{p^*V}) \overset{p_*}{\longrightarrow}
    \Omega_n^\Spin(X^{V-r_V}) \xrightarrow{\sm_W}
    \Omega_{n-r_W}^\Spin(X^{V+W-r_V-r_W})\longrightarrow
    \dotsb
\end{equation}
where $p$ denotes the bundle map $S(W)\to X$ for the sphere bundle of $W$ and $\sm_W$ is the \term{Smith homomorphism}, the map on bordism defined by taking a smooth representative of the Poincaré dual of the cobordism Euler class of $W$. This long exact sequence is natural in the data of $X$, $V$, and $W$. The spin bordism of the Thom spectrum $X^{V-r_V}$ may be interpreted in terms of \term{twisted spin bordism}: the bordism of manifolds $M$ equipped with a map $f\colon M\to X$ and a spin structure on $TM\oplus f^*(V)$ (see \cref{what_is_twisted_spin,shearing}). The exact sequence~\eqref{introSmithLES} is attributed to James and is well-known; its relationship to the Smith homomorphism is explained in our work~\cite{DDKLPTT23,DDKLPTT24} joint with Devalapurkar, Liu, Pacheco-Tallaj, and Thorngren. We call~\eqref{introSmithLES} the \term{Smith long exact sequence}. We prove \cref{main_thm,generator_cor} by making judicious choices for $X$, $V$, and $W$, then invoking exactness of the resulting instances of~\eqref{introSmithLES}. 

In \S\ref{s:bg}, we go over the background we need to prove \cref{main_thm,generator_cor}: twisted spin structures in \S\ref{s:twspin} and the Smith long exact sequence in \S\ref{s:LES}, including several examples of each. In \S\ref{smith_iso_section}, we prove \cref{main_thm}, and in \S\ref{Question10.3section}, we prove \cref{generator_cor}.



\subsection*{Acknowledgements}
We especially want to thank Jonathan Buchanan and Stephen McKean for asking the questions that inspired our project in~\cite{BM23} and for their interest in our work. In addition, we warmly thank
Yu Leon Liu,
Natalia Pacheco-Tallaj,
and
Ryan Thorngren
for conversations helpful to this paper. Part of this project was completed while AD and CK visited the Perimeter Institute for Theoretical Physics for the conference ``Higher Categorical Tools for Quantum Phases of Matter''; research at Perimeter is supported by the Government of Canada through Industry Canada and by the Province of Ontario through the Ministry of Research \& Innovation.
CK is supported by NSF DGE-2141064.


\section{Background}\label{s:bg}
Here we review the Smith long exact sequence and the concepts needed to set it up.


\subsection{Twisted spin structures}\label{s:twspin}
Recall that a \term{spin structure} on a vector bundle $W\to Y$ is defined to be a homotopy class of lift of the principal $\GL_r(\R)$-bundle of frames of $W$ to a principal $\Spin_r$-bundle, where $r$ is the rank of $W$. This data is equivalent to a trivialization of the Stiefel-Whitney classes $w_1(W)$ and $w_2(W)$~\cite[\S 26.5]{BH59}; i.e.\ data of nullhomotopies of the maps $Y\to K(\Z/2, 1)$ and $Y\to K(\Z/2, 2)$ representing $w_1(W)$, resp.\ $w_2(W)$.
\begin{defn}[{\cite[\S 4.1]{HKT20}}]
\label{what_is_twisted_spin}
Let $V\to X$ be a virtual vector bundle. An \term{$(X, V)$-twisted spin structure} on a virtual vector bundle $W\to Y$ is data of a map $f\colon Y\to X$ and a spin structure on $W\oplus f^*(V)$.
\end{defn}
This notion encompasses many commonly considered variations of spin structure.\footnote{However, see Stolz~\cite[\S 2.6]{Sto98} for a different notion of twisted spin structure and~\cite[\S 3.1]{DY23} for examples showing that Stolz' definition is strictly more general than~\cref{what_is_twisted_spin}.}
\begin{exm}
\label{spinc_tw_spin}
A \spinc structure on a virtual vector bundle $W\to Y$ is a reduction of the structure group of $W$ to the group~\cite[\S 3]{ABS64}
\begin{equation}\label{spinc_defn}
    \Spin^c_n\coloneqq \Spin_n\times_{\set{\pm 1}} \U_1,
\end{equation}
where the map to $\O_n$ is the composition
\begin{equation}
    \Spin_n^c\xrightarrow{\mathrm{proj}_1} \SO_n\hookrightarrow \O_n.
\end{equation}
This amounts to the data of a trivialization of $w_1(W)$ and a class $c\in H^2(Y;\Z)$ and an identification of $c\bmod 2 = w_2(W)$ (i.e.\ a trivialization of $c\bmod 2 + w_2(W)$). As $B\U_1$ is a $K(\Z, 2)$, there is a complex line bundle $L\to Y$ with $c_1(L) = c$, and $L$ is unique up to isomorphism.

The condition ``$c_1(L)\bmod 2 = w_2(W)$'' is equivalent to ``$W\oplus L$ is spin'': the Whitney sum formula shows $w_2(W\oplus L) = w_2(W)\oplus w_2(L)$, because $w_1(W)$ and $w_1(L)$ both vanish. Then, $w_2(V) = c_1(V)\bmod 2$ for any complex vector bundle $V$.

Finally, since all complex line bundles are pullbacks of the tautological bundle $V_{\U_1}\to B\U_1$ in a unique way up to isomorphism, the data of a \spinc structure on $W$ is equivalent to a map $f\colon Y\to B\U_1$ and a spin structure on $W\oplus f^*(V_{\U_1})$. That is, \spinc structures are $(B\U_1, V_{\U_1})$-twisted spin structures.
\end{exm}
\begin{exm}\label{pin_exm}
The same argument as in \cref{spinc_tw_spin} identifies several more kinds of twisted spin structures.
The \pinp and \pinm groups are defined as central extensions
\begin{equation}
    \shortexact*{\set{\pm 1}}{\Pin_n^\pm}{\O_n}.
\end{equation}
Central extensions of this form are classified by $H^2(B\O_n; \set{\pm 1})$; $\Pin_n^+$ is the extension corresponding to the class $w_2$, and $\Pin_n^-$ corresponds to $w_2 + w_1^2$.

Standard obstruction theory then implies a \pinp structure on a vector bundle $W\to Y$ is equivalent to a trivialization of $w_2(W)$, while a \pinm structure on $W$ is equivalent to a trivialization of $w_2(W) + w_1(W)^2$. A similar characteristic-class argument as in \cref{spinc_tw_spin} shows that \pinp structures are equivalent to $(B\Z/2, -\sigma)$-twisted spin structures, where $\sigma\to B\Z/2$ is the tautological bundle; similarly, \pinm structures are equivalent to $(B\Z/2, \sigma)$-twisted spin structures.

Campbell~\cite[\S 7.8]{Cam17} proves a related statement for $2\sigma$: $(B\Z/2, 2\sigma)$-twisted spin structures are equivalent to $G$-structures for $G = \Spin\times_{\set{\pm 1}}\Z/4$.
\end{exm}
\begin{exm}\label{pinc}
If one imitates the definition of $\Spin_n^c$ from~\eqref{spinc_defn} using the pin groups, the resulting group and its map down to $\O_n$ is the same whether one begins with $\Pin_n^+$ or $\Pin_n^-$. Thus using either, the group $\Pin_n^c$ is defined to be~\cite[Corollary 3.19]{ABS64}
\begin{equation}
    \Pin_n^c \coloneqq \Pin_n^\pm\times_{\set{\pm 1}} \U_1.
\end{equation}
The map to $\O_n$ is analogous to that for $\Spin_n^c$, and 
a \pinc structure on $W\to Y$ is the data of a class $c\in H^2(Y;\Z)$ with $c\bmod 2 = w_2(W)$; i.e.\ the same as a \spinc structure with no condition on $w_1$. This is equivalent to a $(B\Z/2\times B\U_1, \sigma \oplus V_{\U_1})$-twisted spin structure.
\end{exm}
\begin{exm}\label{spinh_tw_spin}
A quaternionically-minded reader might expect analogues of \cref{spinc_tw_spin,pinc} with $\SU_2$ in place of $\U_1$. Indeed, there are groups
\begin{subequations}
\begin{align}
    \Spin_n^h &\coloneqq \Spin_n\times_{\set{\pm 1}} \SU_2\\
    \Pin_n^{h\pm} &\coloneqq \Pin_n^\pm \times_{\set{\pm 1}} \SU_2.
\end{align}
\end{subequations}
%
Unlike for $\Pin_n^c$, $\Pin_n^{h+}$ and $\Pin_n^{h-}$ do not define equivalent tangential structures. Freed-Hopkins~\cite[(10.20)]{FH21} show that \spinh structures are equivalent to $(B\SO_3, V_{\SO_3})$-twisted spin structures and pin\textsuperscript{$h\pm$} structures are equivalent to $(B\O_3, \pm V_{\O_3})$-twisted spin structures, where for $G = \SO_3$ or $\O_3$, the bundle $V_G\to BG$ is the
tautological bundle.
\end{exm}
%
We will study bordism groups of manifolds with $(X, V)$-twisted spin structures. To do so, we express these twisted spin structures as untwisted tangential structures.
\begin{lem}[Shearing]\label{shearing}
Given $V\to X$ as above, $(X, V)$-twisted spin structures on a vector bundle $W\to Y$ are in natural bijection with homotopy classes of lifts in the diagram
\begin{equation}\begin{tikzcd}
	& {B\Spin\times X} \\
	Y & B\O.
	\arrow["{V_{\Spin} - V + \mathrm{rank}(V)}", from=1-2, to=2-2]
	\arrow["W"', from=2-1, to=2-2]
	\arrow[dashed, from=2-1, to=1-2]
\end{tikzcd}\end{equation}
Here $V_{\Spin}\to B\Spin$ is the tautological virtual vector bundle.
\end{lem}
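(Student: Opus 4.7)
The plan is to verify the bijection by unpacking both sides and writing down mutually inverse assignments. The underlying principle is that a spin structure on a stable vector bundle $E \to Y$ is, by definition, a homotopy class of lift of its classifying map $E\colon Y \to B\O$ through $B\Spin \to B\O$, so both $(X,V)$-twisted spin structures on $W$ and the lifts described in the lemma ultimately reduce to the same kind of data: a pair of maps into $B\Spin$ and $X$ together with a homotopy in $B\O$.

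Starting from a lift in the diagram, I would decompose it as a pair $(g,f)$ with $g\colon Y \to B\Spin$ and $f\colon Y \to X$, together with a homotopy in $B\O$ between the classifying map of $W$ and that of $g^*V_{\Spin} - f^*V + r_V$; equivalently, a stable isomorphism $W \oplus f^*V \cong g^*V_{\Spin} \oplus \underline{\R}^{r_V}$. The right-hand side carries a canonical spin structure since $g^*V_{\Spin}$ is pulled back from the tautological bundle via a map to $B\Spin$, and transferring along the isomorphism yields a spin structure on $W \oplus f^*V$, which together with $f$ is precisely an $(X,V)$-twisted spin structure on $W$. For the reverse direction, given $(f,s)$, I would note that $s$ is by definition a lift $g\colon Y \to B\Spin$ of the classifying map of $W \oplus f^*V$ through $B\Spin \to B\O$, equipped with a homotopy $g^*V_{\Spin} \simeq W \oplus f^*V$ in $B\O$; rearranging this homotopy (subtract $f^*V$, add the trivial bundle of rank $r_V$) produces the required lift in the diagram.

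The main bookkeeping is to verify that these two constructions are mutually inverse on \emph{homotopy classes} of lifts and spin structures, and that the correspondence is natural in $(Y, W)$ and $(X, V)$; both follow from the universal property of $B\Spin \to B\O$ as the space classifying stable spin structures on virtual bundles. I do not anticipate a genuine obstacle: the lemma is essentially a definitional unpacking of the shearing $(W, f^*V) \mapsto (W \oplus f^*V, f^*V)$, with the $+\, r_V$ summand inserted precisely to make the two maps $Y \to B\O$ land in the same connected component so that homotopies between them exist in the first place.
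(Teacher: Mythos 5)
The paper does not actually supply a proof of \cref{shearing}; the lemma is stated and then used (via \cref{shearing_spectra}) with the expectation that the reader supplies the standard definitional unpacking. Your argument does exactly that, and it is correct in substance: a lift in the diagram is a triple $(g,f,h)$ with $g\colon Y\to B\Spin$, $f\colon Y\to X$, and $h$ a homotopy in $B\O$ between the classifying maps, and the homotopy $h$ is precisely a stable trivialization that transports the canonical spin structure on $g^*V_\Spin$ to one on $W\oplus f^*V$; conversely a spin structure on $W\oplus f^*V$ is by definition a lift of its classifying map through $B\Spin\to B\O$, and re-shearing by $-f^*V + r_V$ gives back the lift data. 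This is the same route anyone would take, and the appeal to the universal property of $B\Spin\to B\O$ correctly handles both the mutual-inverse check on homotopy classes and naturality.

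One small bookkeeping slip: the stable isomorphism extracted from $h$ should read $W\oplus f^*V \cong g^*V_\Spin \oplus \underline{\R}^{\,r_V + r_W}$ (or, rank-reduced, $(W - r_W) + (f^*V - r_V) \cong g^*V_\Spin$), not $W\oplus f^*V \cong g^*V_\Spin\oplus \underline{\R}^{r_V}$; the horizontal map labeled $W$ classifies the rank-zero reduction $W - r_W$, so an $r_W$ has to appear on the other side. This does not affect the argument. Also, a minor point of emphasis: the paper's comment that ``the rank term appears so that the entire virtual bundle has rank zero'' is about matching the convention that $B\O$ (without a $\Z$ factor) classifies rank-zero virtual bundles, rather than about $\pi_0(B\O)$ per se---stable $B\O$ is connected, so ``landing in the same component'' is automatic; the issue is purely one of normalizing ranks so the subtraction $V_\Spin - V + r_V$ is itself a rank-zero bundle. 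Neither point is a gap.
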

The rank term appears so that the entire virtual bundle has rank zero.
\begin{cor}\label{shearing_spectra}
The Thom spectrum whose homotopy groups correspond under the Pontrjagin-Thom theorem to the bordism groups of manifolds with $(X, V)$-twisted structures on the \emph{tangent} bundle is homotopy equivalent as $MT\Spin$-modules to $MT\Spin\wedge X^{V - \mathrm{rank}(V)}$.
\end{cor}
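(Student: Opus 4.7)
The plan is to combine the shearing lemma (\cref{shearing}) with the Pontrjagin-Thom theorem in its tangential formulation: for a map of spaces $\xi\colon B'\to B\O$ with underlying virtual bundle $\xi$ of rank zero, the bordism groups of manifolds whose stable tangent bundle is equipped with a lift along $\xi$ are the homotopy groups of the Thom spectrum $(B')^{-\xi}$. I would start by using \cref{shearing} to rewrite the data of an $(X,V)$-twisted spin structure on $TM$ as a lift of $TM\colon M\to B\O$ along the map
\begin{equation}
    \mu\coloneqq V_\Spin - V + \mathrm{rank}(V)\colon B\Spin\times X\longrightarrow B\O,
\end{equation}
which has already been arranged to have virtual rank $0$. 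Pontrjagin-Thom then identifies the corresponding bordism spectrum with $(B\Spin\times X)^{-\mu}$.

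Next I would simplify this Thom spectrum. Since $\mu = V_\Spin + (-V + \mathrm{rank}(V))$ is a sum of virtual bundles pulled back from the two factors of $B\Spin\times X$, standard properties of Thom spectra (the Thom spectrum of an external sum is a smash product) give
\begin{equation}
    (B\Spin\times X)^{-\mu} \;\simeq\; (B\Spin)^{-V_\Spin}\wedge X^{V-\mathrm{rank}(V)}.
\end{equation}
By definition, $(B\Spin)^{-V_\Spin} = MT\Spin$, so the right-hand side equals $MT\Spin\wedge X^{V-\mathrm{rank}(V)}$.

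Finally, the $MT\Spin$-module structure is visible directly from this decomposition: smashing with $MT\Spin$ on the first factor gives the module action, and it is clearly compatible with the factorization because the map $\mu$ is the external sum of $V_\Spin$ (the universal bundle for the $MT\Spin$ factor) and the pullback of $V - \mathrm{rank}(V)$ from $X$. The main thing to check carefully is the sign bookkeeping --- that the minus sign in front of $\mu$ converts $V_\Spin - V + \mathrm{rank}(V)$ into $-V_\Spin + V - \mathrm{rank}(V)$, producing exactly $MT\Spin$ on the $B\Spin$ factor and $X^{V-\mathrm{rank}(V)}$ on the $X$ factor --- but this is essentially routine once shearing has arranged the rank-zero normalization. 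I expect no substantive obstacle beyond this sign check.
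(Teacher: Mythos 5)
Your argument is correct and is essentially the same as the paper's (implicit) one: apply \cref{shearing} to express the twisted structure as a lift along $V_\Spin - V + \mathrm{rank}(V)\colon B\Spin\times X\to B\O$, negate to pass to the tangential form of Pontrjagin--Thom, and use that Thom spectra send external direct sums to smash products, which is exactly the sign bookkeeping the paper summarizes by saying ``the $-V$ becomes a $+V$ because we want tangential bordism, not normal bordism.''
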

Here the $-V$ becomes a $+V$ because we want tangential bordism, not normal bordism. 
For the same reason, we use the Madsen-Tillmann spectrum $MTH$, which is the Thom spectrum whose homotopy groups are the bordism groups of manifolds with $H$-structures on their stable tangent bundles. In homotopy theory, one more often encounters $MH$, which is the Thom spectrum for manifolds with $H$-structures on their stable \emph{normal} bundles. 
For $H = \Spin$, $\Spin^c$, and $\Spin^h$, there is a canonical homotopy equivalence $MTH\simeq MH$, but this is not true for all $H$: for example, $MT\Pin^{h\pm}\simeq M\Pin^{h\mp}$.


\subsection{The Smith long exact sequence}\label{s:LES}

Next, we introduce our main tool: the Smith long exact sequence. We detailed this long exact sequence in~\cite{DDKLPTT24} as part of a general framework for studying Smith homomorphisms. We begin by defining \textit{Smith maps}: maps of Madsen-Tillman spectra induced by the inclusion of the zero section of a vector bundle. 

Let $V\to X$ be as in the previous section, and let $W\to X$ be a real vector bundle. The inclusion of the zero section $0\hookrightarrow W$ induces a map of Thom spaces $X_+\to \mathrm{Th}(X;W)$, 
and upgrading this map to incorporate twisting by $V$ yields the following.

\begin{defn}
Let $V\to X$ be a virtual vector bundle and $W\to X$ be a vector bundle. The \term{Smith map} associated to $X$, $V$, and $W$ is the map of Thom spectra
    \begin{equation}\label{Thom_spectra_Smith}
        \mathrm{sm}_W\colon X^V \to X^{V+W}
    \end{equation}
    induced by the map $v\mapsto (v, 0)\colon V\to V\oplus W$.
\end{defn}
\begin{rem}
    In the case that $V$ is a vector bundle, the formula $v\mapsto (v, 0)$ describes an actual function of spaces which descends to a map of Thom spaces
    \begin{equation}\label{SmithThomspace}
        \mathrm{sm}_W\colon \mathrm{Th}(X;V) \to \mathrm{Th}(X;V\oplus W).
    \end{equation}
    Then~\eqref{Thom_spectra_Smith} is $\Sigma^\infty$ applied to~\eqref{SmithThomspace}.

    For more general $V$, one makes sense of \eqref{Thom_spectra_Smith} as follows: if $X$ is homotopy equivalent to a finite-dimensional CW complex, we may replace $V$ with an actual vector bundle up to a trivial summand, which only (de)suspends the map of Thom spectra. In general, one takes a colimit over $n$-skeleta. See~\cite[Definition 3.13]{DDKLPTT24} for more details.
\end{rem}
 We will consider the maps induced by Smith maps on spin bordism. There is a similar story for other generalized (co)homology theories; see~\cite[\S 7]{DDKLPTT24} for more examples.
 

\begin{defn}
    With $X$, $V$, and $W$ as above, let $r_V\coloneqq \mathrm{rank}(V)$ and $r_W\coloneqq\mathrm{rank}(W)$.
    The \textit{Smith homomorphism} associated to $X$, $V$, and $W$ is the homomorphism
    \begin{equation}\label{spin_Smith}
        \mathrm{sm}_W\colon \Omega^{\Spin}_n(X^{V-r_V}) \longrightarrow \Omega^{\Spin}_{n-r_W}(X^{V+W-r_V-r_W})
    \end{equation}
    induced by applying $\Omega_n^\Spin$ to~\eqref{Thom_spectra_Smith}. 
\end{defn}
We describe the homomorphism~\eqref{spin_Smith} on the level of manifolds in~\ref{geom_Smith}.
%

\begin{exm}\label{pinm_smith_iso}
    Let $X=B\Z/2$, $V=0$, and $W=\sigma$, the tautological line bundle. Including the zero section into $\sigma$ defines a map of spectra
    \begin{equation}
        \mathrm{sm}_\sigma\colon B\Z/2_+ \to (B\Z/2)^\sigma,
    \end{equation}
    and taking spin bordism gives the Smith homomorphism
    \begin{equation}
        \mathrm{sm}_\sigma\colon \Omega^\Spin_n(B\Z/2) \to \Omega^\Spin_{n-1}((B\Z/2)^{\sigma-1}).
    \end{equation}
    Using \cref{pin_exm,shearing}, we may recognize this as a map
    \begin{equation}
    \label{first_smith_isom}
        \mathrm{sm}_\sigma\colon \Omega^{\Spin}_n(B\Z/2) \to \Omega^{\Pinm}_{n-1}
    \end{equation}
    between the bordism groups of spin manifolds equipped with a principal $\Z/2$-bundle and \pinm manifolds. Restricted to reduced spin bordism, \eqref{first_smith_isom} is an isomorphism for all $n$~\cite{ABP69}; we will later establish this as a consequence of \cref{real_Smith_isom}.
\end{exm}

\begin{exm}\label{not_an_isom}
    Taking $X$ and $W$ as above but starting with $V=\sigma$, we obtain a map of spectra
    \begin{equation}
        \mathrm{sm}_\sigma\colon (B\Z/2)^{\sigma} \to (B\Z/2)^{2\sigma},
    \end{equation}
    which on spin bordism gives a Smith homomorphism
    \begin{equation}
        \mathrm{sm}_\sigma\colon \Omega^\Spin_n((B\Z/2)^{\sigma-1}) \to \Omega^\Spin_{n-1}((B\Z/2)^{2\sigma-2}).
    \end{equation}
    \Cref{pin_exm,shearing} allow us to rewrite this as a map
    \begin{equation}
        \mathrm{sm}_\sigma\colon \Omega^{\Pinm}_n \to \Omega^{\Spin\times_{\set{\pm 1}}\Z/4}_{n-1}.
    \end{equation}
    This map is generally not an isomorphism. For example, when $n = 2$, $\Omega_2^{\Pin^-}\cong\Z/8$~\cite[Theorem 5.1]{ABP69}\footnote{Within the statement of~\cite[Theorem 5.1]{ABP69}, the piece relevant for $\Omega_2^{\Pin^-}$ is ``The contribution to $\Omega_*^{\Pin}$ of terms $\pi_*(\mathit{RP}^\infty\wedge \mathbf BO\ang{8n})$ is as follows\dots{} ${Z_2}^{4k+3}$ in dim $8n+8k+2$, $k\ge 0$.'' For us $n = k = 0$. There is a typo: ${Z_2}^{4k+3}$ should be $Z_{2^{4k+3}}$. See Giambalvo~\cite[Theorem 3.4(b)]{Gia73} and Kirby-Taylor~\cite[Lemma 3.6]{KT90} for additional calculations of $\Omega_2^{\Pin^-}$.}
    and $\Omega_1^{\Spin\times_{\set{\pm 1}}\Z/4}\cong\Z/4$~\cite[Theorem 7.9]{Cam17}. Unlike in \cref{pinm_smith_iso}, we cannot solve this problem by discarding a basepoint.
\end{exm}

\begin{exm}\label{pinc_smith_iso}
    Next, we again take $X=B\Z/2$, $V=0$, and $W=\sigma$, but instead take \spinc bordism. In other words, we smash with $MT\Spinc$ instead of $MT\Spin$ and then take homotopy groups.
    We get
    \begin{equation}\label{spinc_pinc_interm}
        \sm_\sigma^c \colon \Omega_n^{\Spin^c}(B\Z/2) \to \Omega^{\Spin^c}_{n-1}((B\Z/2)^{\sigma-1}).
    \end{equation}
    Using \cref{spinc_tw_spin,pinc,shearing}, we can rewrite the codomain:
    \begin{equation}
        \Omega_{n-1}^{\Spin^c}((B\Z/2)^{\sigma-1}) \underset{\eqref{spinc_tw_spin}}{\cong} \Omega_{n-1}^{\Spin}((B\Z/2)^{\sigma-1}\wedge (B\U_1)^{V_{\U_1}-2}) \underset{\eqref{pinc}}{\cong} \Omega_{n-1}^{\Pin^c},
    \end{equation}
    allowing us to rephrase~\eqref{spinc_pinc_interm} as a map
    \begin{equation}\label{spinc_smith_map}
        \sm^c_\sigma\colon \Omega_n^{\Spinc}(B\Z/2) \to \Omega^\Pinc_{n-1}.
    \end{equation}
    Like in \cref{pinm_smith_iso}, when restricted to reduced \spinc bordism of $B\Z/2$,~\eqref{spinc_smith_map} is an isomorphism for all $n$. This is a theorem of Bahri-Gilkey~\cite{BG87}; we will prove it in \cref{real_Smith_isom}.

    We could equivalently describe this example using $X=B\Z/2\times B\U_1$, $V=0$, and $W=\sigma\oplus V_{\U_1}$ and taking spin bordism, applying \cref{pinc}.
\end{exm}

Proving the quaternionic analog of \Cref{pinm_smith_iso,pinc_smith_iso} is our objective in the next section.

Before then, we shall extend Smith homomorphisms to a long exact sequence, toward our second application. 
To do so, we identify the fiber of \cref{Thom_spectra_Smith}. We write $S(W)$ for the sphere bundle of a vector bundle $W\to X$.

The following theorem is attributed to James (see, e.g.,~\cite[Remark 3.14]{KZ18}). See for example~\cite[Theorem 5.1]{DDKLPTT24} for a proof.\footnote{This theorem can also be deduced from a theorem of Conner-Floyd~\cite[\S 16]{CF66}; see~\cite[\S 5.1]{DDKLPTT24} for more information.}
\begin{thm}
    \label{the_cofiber_sequence}
    Let $V$ be a virtual bundle and let $W$ be real vector bundle over $X$. Write $p\colon S(W)\to X$ for the projection map.
    Then there is a fiber sequence in spectra:
    \begin{equation}
    \label{the_cof_seq}
        S(W)^{p^*V} \to X^V \to X^{V \oplus W}.
    \end{equation}
\end{thm}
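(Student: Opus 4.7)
The plan is to realize the desired fiber sequence as the Thomification by $V$ of the standard cofiber sequence associated with the pair $(D(W), S(W))$, where $D(W)$ denotes the disk bundle of $W$. First I would handle the untwisted case $V = 0$: the inclusion $S(W) \hookrightarrow D(W)$ is a cofibration of spaces over $X$, and passing to the quotient gives a cofiber sequence of pointed spaces
\[
    S(W)_+ \longrightarrow D(W)_+ \longrightarrow D(W)/S(W).
\]
The middle term is equivalent to $X_+$ because $D(W)$ deformation retracts onto the zero section, and the quotient $D(W)/S(W)$ is by definition the Thom space $X^W$. This already yields the $V = 0$ version $S(W)_+ \to X_+ \to X^W$ of~\eqref{the_cof_seq}.

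For the general case, let $q\colon D(W) \to X$ and $p\colon S(W) \to X$ denote the bundle projections, and consider the pullbacks $q^*V$ and $p^*V$. Thomifying the above cofiber sequence of spaces over $X$ by these pullbacks would produce
\[
    S(W)^{p^*V} \longrightarrow D(W)^{q^*V} \longrightarrow X^{V\oplus W},
\]
where the middle term is equivalent to $X^V$ because $q$ is a homotopy equivalence (so $q^*V$ corresponds to $V$ under this equivalence and Thomification is invariant under homotopy equivalences of base), and where the right-hand identification $D(W)^{q^*V}/S(W)^{p^*V} \simeq X^{V\oplus W}$ is the key geometric step. When $V$ is an honest vector bundle, this identification follows from the fact that fiberwise one-point compactification of $V\oplus W$ over $X$ can be carried out in two stages: first compactifying the $V$-direction over the disk bundle $D(W)$ to obtain $D(W)^{q^*V}$, then collapsing the $S(W)^{p^*V}$ boundary arising from the $W$-direction to the basepoint.

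The main obstacle I anticipate is handling the case where $V$ is merely a \emph{virtual} bundle. For this I would use the reduction mentioned in the excerpt following the definition of the Smith map: on any finite skeleton of $X$, one may write $V$ as $V' - \R^N$ for an actual bundle $V'$, apply the construction above, and compensate with an $N$-fold desuspension of the entire cofiber sequence. Taking a colimit over skeleta then yields the fiber sequence for arbitrary $V$. Finally, the exactness statement of~\eqref{the_cof_seq} follows from the exactness of the underlying cofiber sequence of pointed spaces together with the fact that the stabilization functor $\Sigma^\infty$ and the colimit over skeleta both preserve cofiber sequences of spectra.
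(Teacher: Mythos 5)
The paper does not actually prove this theorem: it attributes the result to James, cites~\cite[Theorem 5.1]{DDKLPTT24} for a proof, and notes an alternative route through Conner--Floyd~\cite{CF66}. So there is no in-text argument to compare against. Your proof, however, is correct and is the standard geometric argument for this classical fact. The key step---identifying $D(W)^{q^*V}/S(W)^{p^*V}$ with $X^{V\oplus W}$ via the observation that fiberwise one-point compactification of $V_x\oplus W_x$ can be carried out first in the $V$-direction and then by collapsing the boundary in the $W$-direction, so that fiberwise one gets $(D(W_x)\times S^{V_x})/(S(W_x)\times S^{V_x}\cup D(W_x)\times\{\infty\})\cong S^{W_x}\wedge S^{V_x}$---is carried out correctly, and the reduction to honest bundles over finite skeleta (writing $V = V' - \underline{\R}^N$ and desuspending) is the appropriate way to handle virtual $V$. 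One point you leave implicit but should note: under the equivalence $D(W)^{q^*V}\simeq X^V$ induced by the zero section, the quotient map to $X^{V\oplus W}$ agrees with the Smith map $\sm_W$ defined in~\eqref{Thom_spectra_Smith}, and the inclusion $S(W)^{p^*V}\to D(W)^{q^*V}\simeq X^V$ agrees with $p_*$. This is needed to make the fiber sequence usable in the form~\eqref{smith_the_LES} and in~\ref{LES_map_p}--\ref{final_LES}, but it is immediate from your construction since the Smith map is by definition induced by the zero section, which is the inverse of your deformation retraction $q$.
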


\begin{exm}\label{real_Smith_isom}
    Return to the setup with $X=B\Z/2$, $V=0$, and $W=\sigma$ from \cref{pinm_smith_iso}. Since the sphere bundle of $\sigma\to B\Z/2$ gives the universal fibration $E\Z/2\to B\Z/2$, we see that $S(\sigma)$ is contractible. Since $V=0$, we simply we get a fiber sequence
    \begin{equation}\label{isom_fib_seq}
        \mathbb{S} \to B\Z/2_+ \xrightarrow{\sm_\sigma} (B\Z/2)^\sigma.
    \end{equation}
    \begin{lem}\label{RP_split}
    The sequence~\eqref{isom_fib_seq} splits. Explicitly, the crush map $c\colon (B\Z/2)_+\to\mathbb S$ is a section and the restriction of $\sm_\sigma$ to the fiber of $c$ is a homotopy equivalence $\widetilde{\sm}_\sigma\colon \Sigma^\infty B\Z/2\simeq (B\Z/2)^\sigma$.
    \end{lem}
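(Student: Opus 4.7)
The plan is to identify the connecting map of the fiber sequence \eqref{isom_fib_seq} explicitly, verify directly that $c$ is a retraction, and then invoke the standard fact that a retracted fiber sequence in the stable category splits as a wedge with both summands determined.

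First, I would identify the map $i\colon \mathbb{S} \to (B\Z/2)_+$ arising from \cref{the_cofiber_sequence}. Because $S(\sigma) = S^\infty = E\Z/2$ is contractible, $S(\sigma)_+ \simeq \mathbb{S}$, and the map induced by the projection $p\colon S(\sigma)\to B\Z/2$ factors, up to homotopy, through the inclusion of any chosen point of $B\Z/2$. Taking this point to be the basepoint, $i$ is homotopic to the unit map sending the non-basepoint of $S^0$ into $B\Z/2 \subset (B\Z/2)_+$ at the basepoint.

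Next, I would check $c\circ i\simeq \id_{\mathbb{S}}$ directly on the point-set level: under the identification above, $i$ sends the non-basepoint of $S^0$ into $B\Z/2\subset (B\Z/2)_+$, and by definition the crush map $c$ collapses all of $B\Z/2$ to the non-basepoint of $S^0$, while sending the disjoint basepoint to the basepoint. Hence $c$ retracts $i$, so \eqref{isom_fib_seq} is a split fiber sequence.

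In the stable category, a retracted fiber sequence is a split cofiber sequence, giving $(B\Z/2)_+ \simeq \mathbb{S} \vee F$, where $F=\mathrm{cofib}(i)=\mathrm{fib}(c)$. The original fiber sequence identifies $F$ with $(B\Z/2)^\sigma$ via $\sm_\sigma$. On the other hand, for any based space $X$ there is a canonical stable splitting $X_+\simeq \mathbb{S}\vee \Sigma^\infty X$ induced by the same crush $c$, so in our case $F\simeq \Sigma^\infty B\Z/2$. Comparing the two identifications of $F$ yields the desired equivalence $\widetilde{\sm}_\sigma\colon \Sigma^\infty B\Z/2 \overset\simeq\to (B\Z/2)^\sigma$. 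The only step requiring any thought is the first one, pinning down $i$ as the unit map; thereafter the argument is formal stable-homotopy bookkeeping.
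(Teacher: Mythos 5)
Your proof is correct and matches the approach the paper sketches: split $\mathbb S$ off $(B\Z/2)_+$ via the crush map $c$ and use the fiber sequence to identify the complementary summand $\Sigma^\infty B\Z/2$ with $(B\Z/2)^\sigma$. The step you carry out explicitly---identifying $i\colon\mathbb S\to(B\Z/2)_+$ as the unit map by factoring through the contractible total space $S(\sigma)\simeq E\Z/2$---is exactly what the paper flags as ``the heart of the lemma'' and delegates to the reference~\cite[Lemma 2.6.5]{Koc96}.
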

    This is a standard fact: see, e.g.~\cite[Lemma 2.6.5]{Koc96}. In a sense, its proof is trivial: the crush map always splits $\mathbb S$ off of $X_+$ for any space $X$, and the rest 
    follows from that and general properties of fiber sequences of spectra. The heart of 
    the lemma
    is that the inclusion of a basepoint in $B\Z/2$ suspends to participate in a Smith fiber sequence, which is more nontrivial.\footnote{Another way to approach \cref{RP_split} is to directly observe that the Thom space of $\sigma\to\RP^n$ is homeomorphic to $\RP^{n+1}$ and that the zero section inside the Thom space can be homotoped into the standard inclusion $\RP^n\inj\RP^{n+1}$ coming from the equatorial $S^n\inj S^{n+1}$. Then check compatibility as $n\to\infty$ and conclude.}
 %
After smashing with $MT\Spin$, $MT\Spin^c$, or $MT\Spin^h$ and invoking the Pontrjagin-Thom
theorem,~\eqref{isom_fib_seq} produces the Smith isomorphisms of \cref{pinm_smith_iso,pinc_smith_iso,main_thm}.\footnote{The Smith homomorphism interpretation of this equivalence is well-known, but
we are not sure who was the first to discuss it in general: see~\cite[\S 7.1]{DDKLPTT24} and the references therein.}
\end{exm}

There are many examples of Smith maps with more interesting fibers, including the Smith map \eqref{rational_Smith_iso_spectra} that we discuss in \cref{Question10.3section}.


\begin{cor}
    \label{bordism_LES_cor}
    Taking the spin bordism of \eqref{the_cof_seq} yields a homology long exact sequence:
    \begin{equation}
    \label{smith_the_LES}
        \cdots \longrightarrow \Omega^{\Spin}_n(S(W)^{p^*V}) \xrightarrow{~p_*} \Omega^{\Spin}_n(X^V) \xrightarrow{\mathrm{sm}_W} \Omega^{\Spin}_{n-r_W}(X^{V + W -r_W}) \xrightarrow{~\partial~} \Omega^{\Spin}_{n-1}(S(W)^{p^*V}) \longrightarrow \cdots
    \end{equation}
\end{cor}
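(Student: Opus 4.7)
The plan is to apply a standard $\MTSpin$-module argument to the fiber sequence of \cref{the_cofiber_sequence}. Smashing with $\MTSpin$ preserves fiber/cofiber sequences in the stable category, so
\begin{equation}
    \MTSpin\wedge S(W)^{p^*V} \longrightarrow \MTSpin\wedge X^V \longrightarrow \MTSpin\wedge X^{V\oplus W}
\end{equation}
is again a fiber sequence of spectra. A fiber sequence of spectra yields a long exact sequence on $\pi_*$, so the work reduces to applying $\pi_*$ and interpreting each of the three terms as a (twisted) spin bordism group.

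By the Pontrjagin-Thom theorem together with \cref{shearing_spectra}, $\pi_n(\MTSpin\wedge Y)=\Omega_n^\Spin(Y)$ for any spectrum $Y$. This immediately identifies the first two terms of the resulting sequence with $\Omega_n^\Spin(S(W)^{p^*V})$ and $\Omega_n^\Spin(X^V)$, with $p_*$ induced by the first arrow of \cref{the_cofiber_sequence}. For the third term, I would use the suspension identification $X^{V\oplus W}\simeq \Sigma^{r_W}X^{V+W-r_W}$, reflecting the fact that replacing a virtual bundle by one of the same virtual rank shifts its Thom spectrum by a corresponding suspension. This gives
\begin{equation}
    \pi_n(\MTSpin\wedge X^{V\oplus W})\cong\pi_{n-r_W}(\MTSpin\wedge X^{V+W-r_W})=\Omega_{n-r_W}^\Spin(X^{V+W-r_W}),
\end{equation}
and under this identification the induced map $\Omega_n^\Spin(X^V)\to\Omega_{n-r_W}^\Spin(X^{V+W-r_W})$ is precisely the Smith homomorphism $\sm_W$ of \eqref{spin_Smith} by construction.

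There is no substantive obstacle: the corollary is a direct repackaging of \cref{the_cofiber_sequence} through the generalized homology theory $\Omega_*^\Spin(\bl)=\pi_*(\MTSpin\wedge\bl)$, with the rank-$r_W$ suspension absorbed into a degree shift on the right-hand side, and the connecting map $\partial$ is the standard one attached to a fiber sequence of spectra.
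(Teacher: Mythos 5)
Your proof is correct and follows the same route the paper implicitly takes: the corollary is presented as an immediate consequence of \cref{the_cofiber_sequence}, obtained by smashing the fiber sequence with $\MTSpin$, applying $\pi_*$ to get the long exact sequence of a fiber sequence of spectra, and absorbing the rank-$r_W$ suspension $X^{V\oplus W}\simeq\Sigma^{r_W}X^{V+W-r_W}$ into the degree shift on the third term.
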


The central map is the Smith homomorphism, and the other two are the pullback and the connecting homomorphism. This long exact sequence is remarkably useful for bordism computations, specifically for resolving extension problems that arise in spectral sequence calculations. Moreover, we understand this sequence on the level of manifolds:

\begin{enumerate}[label=\textrm{(LES-\arabic*)}]
    \item\label{LES_map_p} Let $[M,h]$ be the bordism class of an $n$-manifold $M$ equipped with a map $h\colon M\to S(W)$ such that $TM\oplus h^*p^*V$ is spin, so that $[M,h]\in \Omega_n^\Spin(S(W)^{p^*V})$. Its image under $p^*$ is the class $[M,M\xrightarrow{h}S(W)\xrightarrow{p}X]$ of the same underlying manifold equipped with a map to $X$ given by composing with the projection.

    \item\label{geom_Smith} Let $[M, f]\in \Omega^\Spin_n(X^V)$, so that $M$ is an $n$-manifold such that $TM\oplus f^*V$ is spin. Consider the pullback of $W$ to $M$. The intersection of the zero section of $W$ with a generic section is, by transversality, a submanifold $N$ of codimension $r_W$. The image of $M$ under the Smith homomorphism is the class $[N, N\hookrightarrow M\xrightarrow{f} X]$. In this setting, the normal bundle $\nu\to N$ of the embedding $i\colon N\hookrightarrow M$ is isomorphic to $W|_N$, so $TN \oplus (i\circ f)^*W \oplus (i\circ f)^*V\cong i^*(TM\oplus f^*V)$ is spin, and therefore $N$ has a $(X, V\oplus W)$-twisted spin structure.

    \item\label{final_LES} Let $[N,g]$ be a class in $\Omega_{n}^\Spin(X^{V+W-r_W})$, so $g\colon N \to X$ is such that $TN+g^*(V+W)$ is spin. The image of $[N,g]$ under the connecting homomorphism is the class $[S(W)|_N, S(W)|_N\hookrightarrow S(W)]\in \Omega_{n-1}^\Spin(S(W))$ given by restricting the sphere bundle of $W$ to $N$.
\end{enumerate}
For a justification of these descriptions, see \cite[Appendix A]{DDKLPTT24}.


\section{A \texorpdfstring{$\Pin^{h-}$}{Pinh minus} Smith Isomorphism}\label{smith_iso_section}
In this section we answer \cite[Question 10.8]{BM23} (\cref{spinh_smith_qn} in this article): is there a Smith isomorphism for \pinhm bordism, generalizing \cref{pinm_smith_iso,pinc_smith_iso}?

%
\begin{thm}
\label{main_thm}
For all $n$, there is an isomorphism $\mathrm{sm}^h_\sigma\colon\widetilde\Omega_{n+1}^{\Spin^h}(\RP^\infty)
\xrightarrow{\cong} \Omega_n^{\Pin^{h-}}$ given by sending a pair $(M,f)$ of a \spinh manifold $M$ with a map $f\colon M\to\RP^\infty$
(which may without loss of generality be assumed to be transverse to $\RP^{\infty-1}\subset\RP^\infty$) to the
\pinhm manifold $f^{-1}(\RP^{\infty-1})$.
\end{thm}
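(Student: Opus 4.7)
The plan is to specialize the Smith long exact sequence of \cref{bordism_LES_cor} to $X = B\Z/2 = \RP^\infty$, $V = 0$, and $W = \sigma$, and then smash the underlying fiber sequence of Thom spectra with $MT\Spin^h$; this is the direct quaternionic analogue of the strategies of \cref{pinm_smith_iso,pinc_smith_iso}. Since $S(\sigma) \simeq E\Z/2$ is contractible, the resulting long exact sequence is
\[\cdots \to \Omega^{\Spin^h}_{n+1}(\mathrm{pt}) \to \Omega^{\Spin^h}_{n+1}(\RP^\infty) \xrightarrow{\,\mathrm{sm}^h_\sigma\,} \Omega^{\Spin^h}_n\bigl((B\Z/2)^{\sigma-1}\bigr) \to \Omega^{\Spin^h}_n(\mathrm{pt}) \to \cdots,\]
whose leftmost map is induced by the basepoint inclusion and is split by the augmentation. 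The sequence therefore breaks into split short exact sequences, so $\mathrm{sm}^h_\sigma$ restricts to an isomorphism on reduced spin$^h$ bordism.

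The remaining task is to identify $\Omega^{\Spin^h}_n((B\Z/2)^{\sigma-1})$ with $\Omega^{\Pin^{h-}}_n$, paralleling the identification used in \cref{pinc_smith_iso} for \spinc. By \cref{shearing_spectra} together with the \spinh $= (B\SO_3, V_{\SO_3})$-twisted spin description of \cref{spinh_tw_spin},
\[\Omega^{\Spin^h}_n\bigl((B\Z/2)^{\sigma-1}\bigr) \;\cong\; \Omega^{\Spin}_n\bigl((B\Z/2)^{\sigma-1} \wedge (B\SO_3)^{V_{\SO_3}-3}\bigr),\]
which is the spin bordism of $(B\Z/2 \times B\SO_3, \sigma \oplus V_{\SO_3})$-twisted spin manifolds. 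A characteristic-class argument in the style of \cref{pinc} then shows that such a twisted spin structure on a manifold $N$ is the same data as a \pinhm structure: the $B\SO_3$-factor records the $\SO_3$-bundle coming from the $\SU_2$-factor of $\Pin^{h-}$, the $B\Z/2$-factor is forced by the Whitney sum condition to classify the orientation line of $TN$, and the defining relation $w_2(E) = w_2(TN) + w_1(TN)^2$ falls out of the spin condition on $TN \oplus \sigma_N \oplus E$.

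The geometric description of $\mathrm{sm}^h_\sigma$ stated in the theorem then follows directly from item~\ref{geom_Smith}: a generic representative of $[M, f] \in \Omega^{\Spin^h}_{n+1}(\RP^\infty)$ has $f$ transverse to $\RP^{\infty-1}$, so $\mathrm{sm}^h_\sigma$ sends $[M, f]$ to $[f^{-1}(\RP^{\infty-1})]$. On the codimension-one submanifold $N = f^{-1}(\RP^{\infty-1})$, the normal bundle is $(f|_N)^*\sigma$ and the restriction of the \spinh structure of $M$ supplies the spin structure on $TN \oplus (f|_N)^*\sigma \oplus E|_N$, realising the \pinhm structure via the identification of the previous paragraph.

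I expect the main technical obstacle to be making rigorous the identification of $(B\Z/2 \times B\SO_3, \sigma \oplus V_{\SO_3})$-twisted spin bordism with $\Omega^{\Pin^{h-}}_*$ used above, since \cref{spinh_tw_spin} records the \pinhm characterisation via the alternative $(B\O_3, -V_{\O_3})$-twisted spin description. The bijection between the two tangential structures, concretely realised by tensoring the $\O_3$-bundle of the latter with the orientation line of $TM$ to produce the $\SO_3$-bundle of the former, is the pin$^h$-analogue of how the product description of \pinc in \cref{pinc} provides an alternative to its single-classifying-space description.
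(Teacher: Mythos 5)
Your proposal is correct and follows essentially the same route as the paper: both use the splitting of the fiber sequence $\mathbb{S} \to B\Z/2_+ \to (B\Z/2)^\sigma$ after smashing with $MT\Spin^h$, and both reduce the problem to identifying the $(B\Z/2, \sigma)$-twisted $\Spin^h$ bordism groups with $\Pin^{h-}$ bordism. The identification you flag as the main technical obstacle, namely reconciling the $(B\Z/2 \times B\SO_3, \sigma \oplus V_{\SO_3})$ twist with the Freed--Hopkins $(B\O_3, -V_{\O_3})$ description, is exactly the content of the paper's Lemmas \ref{pullchar}--\ref{difference_spin} and Corollary \ref{factor_spectra} (together with the $4\sigma$-is-spin observation), and your sketched Stiefel--Whitney computation $w_2(E) = w_2(TN) + w_1(TN)^2$ is the same calculation carried out there, just phrased at the level of tangential structures rather than as a relative Thom isomorphism of spectra.
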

The idea of 
our proof
is this: using \cref{spinh_tw_spin,shearing},
\pinhm bordism is isomorphic to the spin bordism of the Thom
spectrum $(B\O_3)^{3-V_{\O_3}}$, and \spinh bordism is isomorphic to the spin bordism of $(B\SO_3)^{3-V_{\SO_3}}$, where again $V_G\to BG$ denotes a tautological bundle. The
isomorphism $\O_3\cong\SO_3\times\Z/2$ allows one to factor $(B\O_3)^{3-V_{\O_3}}$ as a smash product of $(B\SO_3)^{3-V_{\SO_3}}$ and a piece that corresponds to $\RP^\infty$, leading to the isomorphism in the theorem statement.

Now we give the details, beginning with some lemmas.
Recall that $H^*(B\O_3;\Z/2)\cong\Z/2[w_1,w_2,w_3]$ with $\abs{w_i} = i$~\cite{Bor53}, $H^*(B\Z/2;\Z/2)\cong\Z/2[a]$ with $\abs{a} = 1$, and $H^*(B\SO_3;\Z/2)\cong\Z/2[\overline w_2,\overline w_3]$ with $\abs{\overline w_i} = i$~\cite[Proposition 8.1]{Bor53}. (The classes $\overline w_i$ are the usual Stiefel-Whitney classes, but we write $\overline w$ so that the classes on $B\O_3$
and $B\SO_3$ have different names.)
\begin{lem}\label{pullchar}
Write $\vp\colon \SO_3\times\Z/2 \overset\cong\to \O_3$ for the isomorphism. Then the map $\vp^*\colon H^*(B\O_3;\Z/2)\to H^*(B\Z/2\times B\SO_3;\Z/2)$ on cohomology is such that $\vp^*(w_1) = a$ and $\vp^*(w_2) = a^2 + \overline w_2$.
\end{lem}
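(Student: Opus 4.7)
The plan is to identify the pullback bundle $B\vp^*V_{\O_3}$ explicitly as an external tensor product and then invoke the standard formula for the Stiefel-Whitney classes of a vector bundle tensored with a line bundle.

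First I would fix the isomorphism $\vp$. Because $3$ is odd, $-I\in\O_3\setminus\SO_3$ is central, so the unique splitting $\O_3\cong\SO_3\times\set{\pm I}$ is $\vp(A,\e) = \e A$. Under this splitting, the tautological rank-$3$ bundle $V_{\O_3}\to B\O_3$ pulls back along $B\vp$ to the external tensor product $V_{\SO_3}\boxtimes\sigma$ on $B\SO_3\times B\Z/2$: a frame for $V_{\SO_3}\otimes\sigma$ is acted on by $(A,\e)$ through multiplication by $\e A$, matching the defining $\O_3$-action on $V_{\O_3}$. After checking this identification, the claim reduces to computing $w_1$ and $w_2$ of $V_{\SO_3}\otimes\sigma$.

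For this I would use the standard formula (see, e.g., Milnor-Stasheff): for a rank-$n$ real bundle $E$ and a real line bundle $L$ with $w_1(L)=x$,
\begin{equation}
    w_k(E\otimes L) \;=\; \sum_{i=0}^{k}\binom{n-i}{k-i}\,w_i(E)\,x^{k-i}.
\end{equation}
Setting $n=3$, $E = V_{\SO_3}$ (with $w_1(E)=0$, $w_2(E)=\overline w_2$), $L = \sigma$, and $x=a$, we get $w_1(V_{\SO_3}\otimes\sigma) = \binom{3}{1}a = 3a\equiv a \pmod 2$ and
\begin{equation}
    w_2(V_{\SO_3}\otimes\sigma) \;=\; \binom{3}{2}a^2 + \binom{2}{1}\cdot 0\cdot a + \overline w_2 \;\equiv\; a^2 + \overline w_2 \pmod 2,
\end{equation}
which is exactly the asserted formula for $\vp^*w_1$ and $\vp^*w_2$.

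There is no real obstacle: the only nontrivial step is correctly identifying $\vp^*V_{\O_3}$ with the tensor product $V_{\SO_3}\boxtimes\sigma$, after which the computation is a one-line application of the tensor-product Stiefel-Whitney formula. Alternatively, one could verify the two identities directly: $\vp^*w_1$ must equal $w_1$ of a rank-$3$ bundle whose orientation character on $\SO_3\times\Z/2$ is projection to $\Z/2$, forcing $\vp^*w_1 = a$; and the obstruction-theoretic interpretation of $w_2$ together with the product structure of $B\SO_3\times B\Z/2$ forces $\vp^*w_2$ to lie in $\Z/2\set{a^2,\overline w_2}$, with the coefficients determined by restriction to $B\SO_3\times\pt$ (giving $\overline w_2$) and $\pt\times B\Z/2$ (which, since the restricted bundle is $3\sigma$ and $w_2(3\sigma) = \binom{3}{2}a^2 = a^2$, gives the coefficient of $a^2$).
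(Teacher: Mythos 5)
Your main route is correct but genuinely different from the paper's. You explicitly identify $\vp^*V_{\O_3}$ with the external tensor product $V_{\SO_3}\boxtimes\sigma$ (which is right: under $\vp(A,\e)=\e A$, both bundles are associated to the representation $(A,\e)\mapsto \e A$ of $\SO_3\times\Z/2$ on $\R^3$) and then apply the standard Stiefel--Whitney formula for tensoring a rank-$n$ bundle with a line bundle. Your arithmetic checks out: with $n=3$, $w_1(E)=0$, $w_2(E)=\overline w_2$, $x=a$, you get $w_1 = 3a = a$ and $w_2 = \binom{3}{2}a^2 + w_2(E) = a^2 + \overline w_2$ mod $2$.

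The paper instead argues by elimination: since $\vp^*$ is an isomorphism, $\vp^*(w_1)$ is the unique nonzero class in $H^1(B\Z/2\times B\SO_3;\Z/2)$, namely $a$; and $\vp^*(w_2)$ is pinned down by restricting along the two inclusions $B\SO_3\hookrightarrow B\O_3$ (giving the $\overline w_2$ term, essentially by definition) and $B\Z/2\hookrightarrow B\O_3$ (where the bundle restricts to $3\sigma$, giving the $a^2$ term). Your final paragraph's alternative --- using the orientation character for $w_1$ and restriction to $B\SO_3\times\pt$ and $\pt\times B\Z/2$ for $w_2$ --- is essentially this restriction argument. The advantage of your tensor-product identification is that it computes the entire total Stiefel--Whitney class of $\vp^*V_{\O_3}$ in one step (e.g.\ it also gives $\vp^*(w_3) = a^3 + a\overline w_2 + \overline w_3$), whereas the paper's detect-and-restrict method is more economical if one only needs $w_1$ and $w_2$, which is all the subsequent Lemma~\ref{difference_spin} uses.
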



\begin{proof}
Since $\vp$ is an isomorphism, so is $\vp^*$. Therefore, since $w_1\ne 0$, $\vp^*(w_1)$ must also be nonzero. Since
$H^1(B\Z/2;\Z/2)\cong\Z/2\cdot a$ and $H^1(B\SO_3;\Z/2) \cong 0$, the Künneth formula tells us that the only nonzero class, which
must be $\vp^*(w_1)$, is $a$.

To match $w_2$, we have three nonzero classes: $\overline w_2$, $a^2$, and $\overline w_2 + a^2$. To tell them apart,
first consider the map $i_1\colon B\SO_3\to B\O_3$ induced by the inclusion $\SO_3\inj\O_3$; this factors through
$\vp$ and $i_1^*(w_2) = \overline w_2$ by definition, so $\vp^*(w_2)$ must be either $\overline w_2$ or $\overline
w_2 + a^2$. Likewise, take the map $i_2\colon B\Z/2\to B\O_3$ induced by $\vp$; since this map is defined by
sending $1\in\Z/2$ to an inversion in $\O_3$, $\Z/2$ acts on the pullback representation $i_2^*V_{\O_3}$ as $\set{\pm
1}$, i.e.\ as the representation $3\sigma$. Thus $i_2^*(w_2) = w_2(3\sigma)$,
which by the Whitney sum formula is $w_1(\sigma)^2$, i.e.\ $a^2$. Thus $\vp^*(w_2)$ must have an $a^2$ term, and we
already saw it must have an $\overline w_2$ term, so $\vp^*(w_2) = \overline w_2 + a^2$.
\end{proof}
If $V_1\to X_1$ and $V_2\to X_2$ are virtual vector bundles, then there is a homotopy
equivalence $(X_1\times X_2)^{V_1\boxplus V_2}\simeq (X_1)^{V_1}\wedge (X_2)^{V_2}$. Since $B\O_3$ splits as a
direct product, one might hope that $3-V_{\O_3}\to B\O_3$ is an external direct sum, leading to a splitting of $(B\O_3)^{3-V_{\O_3}}$.
This is not true, but we will be able to replace $3-V_{\O_3}$ with a different vector bundle that is
an external direct sum using the following lemma.
\begin{lem}[{Relative Thom isomorphism, c.f.~\cite[Theorem 1.39]{Deb21} or~\cite{DY23}}]
\label{relative_Thom}
Let $V_1,V_2\to X$ be rank-zero vector bundles. A spin structure on $V_2$ determines a homotopy equivalence of
$MT\Spin$-module spectra $MT\Spin\wedge X^{V_1}\xrightarrow{\simeq} MT\Spin\wedge X^{V_1 + V_2}$.
\end{lem}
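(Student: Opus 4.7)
The plan is to deduce this equivalence from the Thom isomorphism for spin cobordism applied to $V_2$, extended to respect the auxiliary twist $V_1$. I would organize the argument in three steps.

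First, I would interpret both sides as representing twisted spin bordism theories via \cref{shearing_spectra}. A class in $\pi_n(MT\Spin\wedge X^{V_1})$ corresponds to a closed $n$-manifold $M$ with a map $f\colon M\to X$ and a spin structure on $TM\oplus f^*V_1$, while a class in $\pi_n(MT\Spin\wedge X^{V_1+V_2})$ is the same data but with a spin structure on $TM\oplus f^*V_1\oplus f^*V_2$. I would produce a natural bijection of such data using the given spin structure $s_2$ on $V_2$: given a spin structure on $TM\oplus f^*V_1$, adjoin $f^*s_2$ to obtain a spin structure on $TM\oplus f^*V_1\oplus f^*V_2$. This is a bijection because spin structures on $f^*V_2$ form an $H^1(M;\Z/2)$-torsor with canonical basepoint $f^*s_2$. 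The operation is compatible with disjoint union and bordism, so gives a natural isomorphism of the represented bordism theories.

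Second, to promote the bordism-level bijection to a map of $MT\Spin$-module spectra, I would exhibit the desired equivalence as multiplication by a Thom class. Since $V_2$ has rank zero and is spin, its spin structure determines a Thom class $u\colon X^{V_2}\to MT\Spin$ (no suspension required). Using the Thom diagonal $X^{V_1+V_2}\to X^{V_1}\wedge X^{V_2}$ coming from the space-level diagonal $X\to X\times X$, together with the multiplication on $MT\Spin$, I would build a natural map
\begin{equation}
    MT\Spin\wedge X^{V_1+V_2}\longrightarrow MT\Spin\wedge X^{V_1}\wedge X^{V_2}\xrightarrow{\id\wedge\id\wedge u} MT\Spin\wedge X^{V_1}\wedge MT\Spin\longrightarrow MT\Spin\wedge X^{V_1}
\end{equation}
of $MT\Spin$-modules, and verify that on homotopy groups it realizes the inverse of the bijection from the previous step.

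Third, I would argue that this map is an equivalence. The cleanest route is to reduce to the absolute case $V_1=0$, which is the standard spin Thom isomorphism $MT\Spin\wedge X^{V_2}\simeq MT\Spin\wedge X_+$, and then observe that the construction above is the universal case base-changed along $X^{V_1}$ in an appropriate sense; alternatively, one can invoke naturality and a cell-by-cell induction on $X$ to reduce to the point, where the statement is trivial. The main obstacle is the second step: making the module-level map honestly, rather than merely an isomorphism of bordism functors, and checking its $MT\Spin$-linearity. This technicality is handled by the cited references~\cite[Theorem 1.39]{Deb21} and~\cite{DY23}, and amounts to the standard fact that spin-oriented virtual bundles satisfy the Thom isomorphism in spin cobordism with the appropriate functoriality.
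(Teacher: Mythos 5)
The paper does not prove this lemma; it is imported by citation from~\cite[Theorem~1.39]{Deb21} and~\cite{DY23}, so there is no internal proof to compare against. Your sketch is a reasonable reconstruction of the standard argument and is essentially the route those references take: the spin structure on the rank-zero bundle $V_2$ gives a Thom class $u\colon X^{V_2}\to MT\Spin$, the Thom diagonal $X^{V_1+V_2}\to X^{V_1}\wedge X^{V_2}$ and the ring structure on $MT\Spin$ assemble $u$ into an $MT\Spin$-module map, and one checks it is an equivalence either by matching it against the geometric bijection on homotopy groups or by a cell-by-cell reduction to a point.

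Two small points worth tightening. First, the justification in step one is slightly off: the relevant observation is not that spin structures on $f^*V_2$ alone form a pointed $H^1(M;\Z/2)$-torsor, but rather that ``add the fixed cocycle $f^*s_2$'' is an $H^1(M;\Z/2)$-equivariant map between the torsor of spin structures on $TM\oplus f^*V_1$ and that on $TM\oplus f^*V_1\oplus f^*V_2$, and any equivariant map of torsors is a bijection once both sides are nonempty. Nonemptiness is simultaneous because the given spin structure on $V_2$ identifies $w_1$ and $w_2$ of the two bundles, so the obstructions agree. Second, your constructed map goes from $MT\Spin\wedge X^{V_1+V_2}$ to $MT\Spin\wedge X^{V_1}$, opposite to the direction in the lemma statement; this is harmless since you are producing an equivalence, but it deserves a remark. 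Finally, note that steps one and three are somewhat redundant: once the module-level map of step two is shown on homotopy to realize the bijection of step one (which is $\pi_*$ of both sides), it is automatically a $\pi_*$-isomorphism and hence an equivalence, so the skeletal induction is an alternative rather than an additional ingredient.
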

We will replace $\pm(V_{\O_3}-3)$ with $\pm((3\sigma-3) \boxplus (V_{\SO_3}-3))$, so we must check the hypothesis of
\cref{relative_Thom}.
\begin{lem}
\label{difference_spin}
The virtual vector bundles $\vp^*(\pm(V_{\O_3}-3)) - \pm((3\sigma-3) \boxplus (V_{\SO_3}-3))$ are spin.
\end{lem}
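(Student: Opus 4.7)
The plan is to reduce the claim to a Stiefel-Whitney class calculation. Since both virtual bundles in the statement have rank zero, their virtual difference admits a spin structure if and only if $w_1$ and $w_2$ of that difference both vanish, which in turn is equivalent to the first two Stiefel-Whitney classes of the two virtual bundles agreeing. Moreover the $-$ case reduces to the $+$ case: for any rank-$0$ virtual bundle $V$ one checks from $w(V)w(-V)=1$ that $w_1(-V)=w_1(V)$ and $w_2(-V)=w_1(V)^2+w_2(V)$, so $V$ is spin iff $-V$ is.

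For the $\vp^*$ side, I would apply \cref{pullchar} to read off $w_1(\vp^*V_{\O_3})=a$ and $w_2(\vp^*V_{\O_3})=a^2+\overline w_2$ in $H^*(B\Z/2\times B\SO_3;\Z/2)$, both unchanged upon subtracting a trivial rank-$3$ summand. On the other side, the Whitney sum formula and the splitting of an external direct sum as a Whitney sum of pullbacks give the total Stiefel-Whitney class of $3\sigma\boxplus V_{\SO_3}$ as $(1+a)^3(1+\overline w_2+\overline w_3)$; since $(1+a)^3\equiv 1+a+a^2+a^3\pmod 2$, expanding this product yields $w_1=a$ and $w_2=a^2+\overline w_2$ as well. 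Stripping off rank-$3$ trivial summands again leaves low SW classes unchanged.

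Thus the first two Stiefel-Whitney classes of the two virtual bundles coincide, so their virtual difference has $w_1=w_2=0$ and is spin. The real content of the argument is \cref{pullchar}, which is already in hand; beyond that the verification is an immediate application of the Whitney sum formula, and I anticipate no substantive obstacle.
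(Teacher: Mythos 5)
Your proposal is correct and takes essentially the same approach as the paper: both proofs reduce the claim to showing $w_1$ and $w_2$ vanish for the rank-zero virtual difference, both rely on \cref{pullchar} together with the Whitney sum formula, and both handle the $-$ case by observing that $E$ is spin iff $-E$ is. The one small organizational difference is that you compare the Stiefel-Whitney classes of $\vp^*(V_{\O_3})$ and $3\sigma\boxplus V_{\SO_3}$ directly and observe they agree, which sidesteps the formulas for $w_i$ of a negated bundle that the paper carries through its computation; this is a slightly tidier bookkeeping of the same calculation.
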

\begin{proof}
Directly compute with the Whitney sum formulas that $w_1(V_1\oplus V_2) = w_1(V_1) + w_1(V_2)$ and $w_2(V_1\oplus V_2) =
w_2(V_1) + w_1(V_1)w_1(V_2) + w_2(V_2)$. 
For any vector bundle $E$, setting $V_1 = E$ and $V_2 = -E$ (so that $V_1\oplus
V_2 = 0$) gives that $w_1(-E) = w_1(E)$ and $w_2(-E) = w_2(E) + w_1(E)^2$. Stability of the Stiefel-Whitney classes
implies we may add or subtract trivial bundles without affecting their characteristic classes.

Thus, for $E_+\coloneqq \vp^*(V_{\O_3}-3) - ((3\sigma-3)\boxplus V_{\SO_3}-3)$, we have using \cref{pullchar} that
\begin{equation}
\begin{aligned}
	w_1(E_+) &= \textcolor{BrickRed}{w_1(\vp^*(V_{\O_3}-3))} + w_1(-((3\sigma-3)\boxplus (V_{\SO_3}-3)))\\
	&= \textcolor{BrickRed}{\vp^*(w_1(V_{\O_3}))} + \textcolor{Green}{w_1(-3\sigma)} + \textcolor{MidnightBlue}{w_1(-V_{\SO_3})}\\
	&= \textcolor{BrickRed}{\vp^*(w_1)} + \textcolor{Green}{w_1(3\sigma)} + \textcolor{MidnightBlue}{w_1(V_{\SO_3})}\\
	&= \textcolor{BrickRed}{a} + \textcolor{Green}{a} + \textcolor{MidnightBlue}{0} = 0,
\end{aligned}
\end{equation}
and
\begin{equation}
\begin{aligned}
	w_2(E_+) &= \textcolor{BrickRed}{w_2(\vp^*(V_{\O_3}-3))} +
		\textcolor{RedOrange}{w_1(\vp^*(V_{\O_3}-3))w_1(-(3(\sigma-3)\boxplus (V_{\SO_3}-3)))} + w_2(-((3(\sigma-3)\boxplus
		(V_{\SO_3}-3)))\\
	&= \textcolor{BrickRed}{\vp^*(w_2(V_{\O_3}))} + \textcolor{RedOrange}{\vp^*(w_1(V_{\O_3}))(w_1(-3\sigma) + w_1(-V_{\SO_3}))} +
		\textcolor{Green}{w_2(-3\sigma)} + \textcolor{MidnightBlue}{w_1(-3\sigma)w_1(-V_{\SO_3})} +
		\textcolor{Fuchsia}{w_2(-V_{\SO_3})}\\
	&= \textcolor{BrickRed}{\overline w_2 + a^2} + \textcolor{RedOrange}{a^2} + \textcolor{Green}{w_2(3\sigma) +
		w_1(3\sigma)^2} + \textcolor{MidnightBlue}{w_1(3\sigma)w_1(W)} + \textcolor{Fuchsia}{w_2(V_{\SO_3}) + w_1(V_{\SO_3})^2}\\
	&= \textcolor{BrickRed}{\overline w_2 + a^2} + \textcolor{RedOrange}{a^2} + \textcolor{Green}{a^2 + a^2} +
		\textcolor{MidnightBlue}{0} + \textcolor{Fuchsia}{\overline w_2 + 0} = 0.
\end{aligned}
\end{equation}
Since the first and second Stiefel-Whitney classes of $E_+$ vanish, $E_+$ is spin. This also implies $w_1$ and $w_2$ of $-E_+$ vanish, so we have proven the claim for both bundles in the lemma statement.
\end{proof}
\begin{cor}
\label{factor_spectra}
There are equivalences of spectra (in fact, of $MT\Spin$-module spectra)
\begin{equation}
	MT\Spin\wedge (B\SO_3)^{V_{\SO_3}-3}\wedge (B\Z/2)^{\pm(3\sigma-3)} \simeq MT\Spin\wedge (B\O_3)^{\pm(V_{\O_3}-3)}.
\end{equation}
\end{cor}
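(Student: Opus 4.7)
The plan is to chain together three ingredients: the product decomposition $B\O_3 \simeq B\SO_3 \times B\Z/2$ coming from $\varphi$, the relative Thom isomorphism (\cref{relative_Thom}) applied via \cref{difference_spin}, and the standard identification of the Thom spectrum of an external direct sum with a smash product. The $\pm$ on the $B\SO_3$ factor in the target of the statement is not present, so at the end I will also need to explain why $(B\SO_3)^{V_{\SO_3}-3}$ and $(B\SO_3)^{-(V_{\SO_3}-3)}$ give $MT\Spin$-equivalent Thom spectra.

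First I would apply $B(\bl)$ to $\varphi\colon \SO_3\times\Z/2 \xrightarrow{\cong} \O_3$ to obtain a homotopy equivalence $B\varphi\colon B\SO_3\times B\Z/2 \xrightarrow{\simeq} B\O_3$, which induces a homotopy equivalence of Thom spectra
\begin{equation}
    (B\O_3)^{\pm(V_{\O_3}-3)} \simeq (B\SO_3\times B\Z/2)^{\varphi^*(\pm(V_{\O_3}-3))}.
\end{equation}
Next, by \cref{difference_spin}, the virtual bundle $\varphi^*(\pm(V_{\O_3}-3)) - \pm\bigl((V_{\SO_3}-3)\boxplus (3\sigma-3)\bigr)$ over $B\SO_3\times B\Z/2$ is spin (and of rank zero). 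So by the relative Thom isomorphism (\cref{relative_Thom}), smashing with $MT\Spin$ gives an $MT\Spin$-module equivalence
\begin{equation}
    MT\Spin\wedge (B\SO_3\times B\Z/2)^{\varphi^*(\pm(V_{\O_3}-3))} \simeq MT\Spin\wedge (B\SO_3\times B\Z/2)^{\pm((V_{\SO_3}-3)\boxplus (3\sigma-3))}.
\end{equation}

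For the third step, the Thom spectrum of an external direct sum of virtual bundles over a product of spaces splits as a smash product of Thom spectra, yielding
\begin{equation}
    (B\SO_3\times B\Z/2)^{\pm((V_{\SO_3}-3)\boxplus (3\sigma-3))} \simeq (B\SO_3)^{\pm(V_{\SO_3}-3)} \wedge (B\Z/2)^{\pm(3\sigma-3)}.
\end{equation}
Finally, to eliminate the $\pm$ sign on the $B\SO_3$ factor, I observe that $2(V_{\SO_3}-3)$ has $w_1=0$ (as any even multiple of a bundle does) and $w_2(2V_{\SO_3}) = w_1(V_{\SO_3})^2 = 0$, hence is spin; applying \cref{relative_Thom} once more gives $MT\Spin\wedge (B\SO_3)^{-(V_{\SO_3}-3)}\simeq MT\Spin\wedge (B\SO_3)^{V_{\SO_3}-3}$. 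Chaining the four equivalences produces the desired $MT\Spin$-module equivalence.

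No real obstacle stands in the way; the only thing requiring care is bookkeeping the signs, especially making sure the spin virtual bundle used in \cref{relative_Thom} is genuinely rank zero (which is why the tautological bundles appear shifted by their ranks throughout) and that the $\pm$ on the $B\SO_3$ slot can be freely flipped as above.
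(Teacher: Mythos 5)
Your proposal is correct and follows essentially the same route as the paper: split $B\O_3$ as a product via $\varphi$, apply \cref{relative_Thom} using the spin-ness established in \cref{difference_spin}, then use the fact that the Thom spectrum of an external direct sum is a smash product. You are a bit more careful than the paper on one point: the paper proves the $+$ case and says the $-$ case is ``analogous,'' but running the $+$ argument verbatim with the opposite sign leaves $(B\SO_3)^{-(V_{\SO_3}-3)}$ on the left, whereas the corollary claims $(B\SO_3)^{V_{\SO_3}-3}$ (no sign) in both cases. Your final step, observing that $2(V_{\SO_3}-3)$ is spin (since $w_1(2V_{\SO_3})=0$ and $w_2(2V_{\SO_3})=w_1(V_{\SO_3})^2=0$) and applying \cref{relative_Thom} once more to flip that sign, correctly supplies the step the paper elides.
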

\begin{proof}
We prove the $+$ case; the $-$ case is analogous. \Cref{relative_Thom} tells us that, since $\vp^*(V_{\O_3}-3) -
((3\sigma-3) \boxplus(V_{\SO_3}-3))$ is spin and rank-zero, there are equivalences of $MT\Spin$-module spectra
\begin{equation}
\label{thom_split}
\begin{aligned}
	MT\Spin\wedge (B\O_3)^{V_{\O_3}-3} &\simeq MT\Spin\wedge (B\Z/2\times B\SO_3)^{\vp^*V_{\O_3} - 3}\\
    &\simeq MT\Spin\wedge
	(B\Z/2\times B\SO_3)^{(3\sigma-3)\boxplus (V_{\SO_3}-3)}.
\end{aligned}
\end{equation}
As we noted above, the Thom spectrum functor sends external direct sums to smash products,
so the Thom spectrum
in~\eqref{thom_split} factors as $MT\Spin\wedge (B\Z/2)^{3\sigma-3}\wedge (B\SO_3)^{V_{\SO_3}-3}$, as we wanted to prove.
\end{proof}
Now we are ready to prove the main theorem of this section.
\begin{proof}[Proof of \cref{main_thm}]
\Cref{spinh_tw_spin,shearing} combine to produce homotopy equivalences
\begin{subequations}
\label{twisted_str}
\begin{align}
	MT\Spin^h &\simeq MT\Spin\wedge (B\SO_3)^{V_{\SO_3}-3}\\
	MT\Pin^{h\pm} &\simeq MT\Spin\wedge (B\O_3)^{\pm(V_{\O_3}-3)},
\end{align}
\end{subequations}
which are originally due to Freed-Hopkins~\cite[(10.2)]{FH21}.
Combining~\eqref{twisted_str} with \cref{factor_spectra}, we have produced equivalences
\begin{equation}
	MT\Pin^{h\pm} \simeq MT\Spin^h \wedge (B\Z/2)^{\pm(3\sigma-3)}.
\end{equation}

One can check using the Whitney sum formula that the bundle $4\sigma\to B\Z/2$ has a spin structure. Thus we may
once again invoke \cref{relative_Thom} to obtain an equivalence $MT\Spin\wedge (B\Z/2)^{-(3\sigma-3)}\simeq
MT\Spin\wedge (B\Z/2)^{\sigma-1}$: the difference between the two vector bundles is $4\sigma-4$, which is spin, so
adding $4\sigma-4$ to $-(3\sigma-3)$ does not change the homotopy type.

The only remaining task is to get from $(B\Z/2)^{\sigma-1}$ to $\RP^\infty$ and interpret the resulting equivalence as a Smith isomorphism. This is done in \cref{real_Smith_isom}.
\end{proof}
\begin{rem}
Buchanan-McKean's original question asked about a Smith isomorphism between \pinhm bordism and the \spinh bordism of $\HP^\infty$. These bordism groups are not isomorphic: to see this, run the Atiyah-Hirzebruch spectral sequence
\begin{equation}
	E^2_{p,q} = \widetilde H_p(\HP^\infty; \Omega_q^{\Spin^h}(\pt)\otimes \Q) \Longrightarrow \widetilde
	\Omega_{p+q}^{\Spin^h}(\HP^\infty)\otimes\Q.
\end{equation}
All Atiyah-Hirzebruch spectral sequences with $\Q$ coefficients collapse, so $E^2_{4,0}\cong \widetilde
H_4(\HP^\infty; \Q)\cong\Q$~\cite[\S 15.5]{BH58} implies $\widetilde\Omega_4^{\Spin^h}(\HP^\infty)\otimes\Q\ne 0$, but
$\Omega_3^{\Pin^{h-}}$ is zero~\cite[Theorem 9.97]{FH21}.

The \pinm and \pinc Smith isomorphisms of \cref{pinm_smith_iso,pinc_smith_iso} both use $B\Z/2\simeq\RP^\infty$, ultimately because $\Pin_n^-$, resp.\ $\Pin_n^c$ are extensions of $\Z/2$ by $\Spin_n$, resp.\ $\Spin_n^c$. That $\Pin_n^{h-}$ is also an extension of $\Z/2$, this time by $\Spin_n^h$, suggested to us that the Smith isomorphism should also use $B\Z/2$. Smith isomorphisms involving $B\SU_2\simeq \HP^\infty$ do exist~\cite[Example 7.42]{DDKLPTT24}, in a setting where one group is an extension of $\SU_2$ by another.
\end{rem}

\section{Rational Generators for \texorpdfstring{$\Spin^h$}{Spinh} Bordism from \texorpdfstring{$\Spin^c$}{Spinc} Bordism}\label{Question10.3section}

Next, we address the Question 10.3 asked by Buchanan and McKean in \cite{BM23} comparing \spinc and \spinh bordism in dimensions $0\bmod 4$.
\begin{thm}[{Buchanan-McKean~\cite[Corollary 8.6]{BM23}}]
\label{BM_same_rank}
For all $k\ge 0$, $\mathrm{rank}(\Omega_{4k}^{\Spin^c}) = \mathrm{rank}(\Omega_{4k}^{\Spin^h})$.
\end{thm}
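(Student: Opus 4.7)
My plan is to specialize the Smith long exact sequence of \cref{bordism_LES_cor} to data that places $\Spin^c$ and $\Spin^h$ bordism as adjacent terms, then tensor with $\Q$. Specifically, I take $X = B\SO_3$, $V = V_{\SO_3}$, and $W = V_{\SO_3}$. By \cref{spinh_tw_spin} and \cref{shearing_spectra}, the middle term $\Omega_n^\Spin((B\SO_3)^{V_{\SO_3} - 3})$ is $\Omega_n^{\Spin^h}$. The sphere bundle $S(V_{\SO_3})$ is the Borel construction of the transitive $\SO_3$-action on $S^2 = \SO_3/\SO_2$, so the projection $p\colon S(V_{\SO_3})\to B\SO_3$ is the standard map $B\U_1 = B\SO_2\to B\SO_3$. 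Restricting the defining $3$-dimensional representation of $\SO_3$ to block-diagonal $\SO_2$ decomposes it as the standard real $2$-dimensional rep plus a trivial summand, so $p^*V_{\SO_3}\cong V_{\U_1}\oplus\underline\R$ and thus $p^*V_{\SO_3} - 3\cong V_{\U_1} - 2$ as virtual bundles. By \cref{spinc_tw_spin} and \cref{shearing_spectra}, the first term becomes $\Omega_n^{\Spin^c}$.

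For the third term, I use that $2V_{\SO_3}$ has $w_1 = 0$ and $w_2(2V_{\SO_3}) = 2\overline w_2 \equiv 0\pmod 2$, so $2V_{\SO_3} - 6$ is a spin virtual bundle of rank $0$; \cref{relative_Thom} then gives an $MT\Spin$-equivalence $(B\SO_3)^{2V_{\SO_3} - 6}\simeq (B\SO_3)_+$, so the third term is $\Omega_{n-3}^\Spin(B\SO_3)$. Altogether the LES becomes
\begin{equation}
\cdots \longrightarrow \Omega_n^{\Spin^c} \xrightarrow{p_*} \Omega_n^{\Spin^h} \xrightarrow{\sm_{V_{\SO_3}}} \Omega_{n-3}^\Spin(B\SO_3) \xrightarrow{\partial} \Omega_{n-1}^{\Spin^c} \longrightarrow \cdots
\end{equation}
and the $p_*$ appearing here is indeed the map induced by the group inclusion $\Spin_n^c\inj\Spin_n^h$, since both come from the embedding $\SO_2 = \U_1\inj\SU_2\to\SO_3$.

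To finish, I rationalize. Because $H^*(B\SO_3;\Q) = \Q[p_1]$ with $|p_1| = 4$, the homology $H_*(B\SO_3;\Q)$ is concentrated in degrees divisible by $4$; and $\Omega_*^\Spin\otimes\Q\cong \Omega_*^\SO\otimes\Q$ is a polynomial ring on generators in degrees divisible by $4$. The Atiyah-Hirzebruch spectral sequence for $\Omega_*^\Spin(B\SO_3)\otimes\Q$ collapses for degree reasons, so $\Omega_j^\Spin(B\SO_3)\otimes\Q = 0$ whenever $j\not\equiv 0\pmod 4$. At $n = 4k$, both neighbors $\Omega_{4k-2}^\Spin(B\SO_3)\otimes\Q$ and $\Omega_{4k-3}^\Spin(B\SO_3)\otimes\Q$ in the rationalized LES vanish, so $p_*\otimes\Q$ is an isomorphism, yielding $\mathrm{rank}(\Omega_{4k}^{\Spin^c}) = \mathrm{rank}(\Omega_{4k}^{\Spin^h})$.

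The main obstacle is packaging the identifications cleanly: verifying that $S(V_{\SO_3})\to B\SO_3$ really is the map $B\U_1\to B\SO_3$ induced by $\SO_2\inj\SO_3$, that $p^*V_{\SO_3}$ admits the representation-theoretic splitting above as a \emph{bundle} and not just a virtual bundle, and that the Smith-LES's $p_*$ truly matches the inclusion-induced map---so that this LES will also be strong enough to deduce the $\Z[1/2]$-refinement in \cref{generator_cor}. These checks are conceptually clean but delicate, and without them the comparison of $\Spin^c$ and $\Spin^h$ bordism would rest on a different, less geometric footing.
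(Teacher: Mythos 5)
Your proposal is correct and takes essentially the same approach as the paper: specialize the Smith long exact sequence with $X = B\SO_3$, $V = W = V_{\SO_3}$, identify the sphere-bundle term with $\Spin^c$ bordism via $\SO_2\hookrightarrow\SO_3$, and observe that the flanking $\Omega_*^\Spin(B\SO_3)$ terms vanish rationally in degrees not divisible by $4$. The paper proves the slightly stronger $\Z[1/2]$-local statement (\cref{generator_cor}) from the same long exact sequence; your $\Q$-version is exactly what the rank equality needs.
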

\begin{ques}[{\cite[Question 10.3]{BM23}}]
\label{spinc_spinh_ranks}
Is there a geometric explanation for the equality of ranks in \cref{BM_same_rank} between degree-$4k$ \spinc and \spinh bordism?
Specifically, is there a procedure for producing generators for the free part of $\Omega_{4k}^{\Spin^h}$ from that of $\Omega_{4k}^{\Spin^c}$?
\end{ques}
We use the Smith long exact sequence to mostly answer this question: it provides a geometric explanation for the equality of ranks and allows one to produce \emph{rational} generators for \spinh bordism from generators of \spinc bordism. In the course of the proof, we will lift from $\Q$ to $\Z[1/2]$, but we will also see why it is hard to lift to a result over $\Z$.
\begin{constr}
The inclusion $\set{\pm 1}\hookrightarrow\SU_2$ used in the definition of $\Spin_n^h$ (\cref{spinh_tw_spin}) factors as the composition of the usual inclusion $\set{\pm 1}\hookrightarrow \U_1$ and the standard inclusion $\U_1\hookrightarrow \SU_2$. Taking the product with $\Spin_n$ and quotienting by the diagonal central $\set{\pm 1}$ subgroup, we obtain an inclusion $\iota\colon \Spin_n^c\hookrightarrow \Spin_n^h$ commuting with the structure maps to $\O_n$.

Given a vector bundle $V\to X$ with \spinc structure $\mathfrak s$, the \spinh structure $\iota(\mathfrak s)$ is called the \term{induced \spinh structure} of $\mathfrak s$.
\end{constr}
%
%
\begin{thm}\label{thm_answer_to_10.3}\hfill
\begin{enumerate}
    \item\label{10.3_Smith} Taking the induced \spinh structure of a \spinc structure defines a map of bordism groups $\Omega_n^{\Spin^c}\to\Omega_n^{\Spin^h}$ that participates in a Smith long exact sequence.
    \item\label{10.3_Z12} The induced map $\Omega_{4k}^{\Spin^c}\otimes\Z[1/2]\to\Omega_{4k}^{\Spin^h}\otimes\Z[1/2]$ is an isomorphism.
\end{enumerate}
%
\end{thm}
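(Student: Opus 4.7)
The plan is to realize the map $\Omega_n^{\Spin^c} \to \Omega_n^{\Spin^h}$ induced by $\iota$ as the left-hand map in an instance of the Smith long exact sequence~\eqref{smith_the_LES}, then conclude the rational statement by showing that the neighboring terms vanish after inverting $2$. For part~(1), I apply \cref{bordism_LES_cor} with $X = B\SO_3$, $W = V_{\SO_3}$, and $V = V_{\SO_3} - 3$. By \cref{spinh_tw_spin,shearing}, the middle term is $\Omega_n^{\Spin^h}$. The sphere bundle $S(V_{\SO_3}) = E\SO_3 \times_{\SO_3} S^2$ is homotopy equivalent to $B\SO_2 \simeq B\U_1$ because $\SO_3$ acts transitively on $S^2$ with stabilizer $\SO_2$; under this identification $p^* V_{\SO_3}$ splits canonically as $\underline{\R} \oplus V_{\SO_2}$ (the tautological normal direction plus the fiberwise tangent bundle of $S^2$), so $p^*(V_{\SO_3} - 3) \cong V_{\U_1} - 2$ and the first term becomes $\Omega_n^{\Spin^c}$ by \cref{spinc_tw_spin,shearing}.

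For the third term $\Omega_{n-3}^\Spin((B\SO_3)^{2V_{\SO_3} - 6})$, a Whitney-sum computation along the lines of \cref{difference_spin} shows that $2V_{\SO_3}$ has trivial $w_1$ and $w_2$, so $2V_{\SO_3} - 6$ is a rank-zero spin virtual bundle. \Cref{relative_Thom} then yields an $MT\Spin$-module equivalence $MT\Spin \wedge (B\SO_3)^{2V_{\SO_3} - 6} \simeq MT\Spin \wedge (B\SO_3)_+$, identifying the third term with $\Omega_{n-3}^\Spin(B\SO_3)$. To confirm that the left-hand map agrees with $\iota_*$, I use the geometric description~\ref{LES_map_p}: a \spinc manifold $M$ with classifying map $g\colon M \to B\U_1$ for its determinant line bundle goes to the composition $M \xrightarrow{g} B\U_1 \simeq B\SO_2 \hookrightarrow B\SO_3$, whose associated $\SO_3$-bundle is $g^* V_{\U_1} \oplus \underline{\R}$; the factorization $\{\pm 1\} \hookrightarrow \U_1 \hookrightarrow \SU_2$ from the construction of $\iota$ shows that this is precisely the induced \spinh structure.

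For part~(2), tensor the resulting Smith long exact sequence with $\Z[1/2]$; it suffices to show that $\Omega_{4k-3}^\Spin(B\SO_3) \otimes \Z[1/2] = 0$ and $\Omega_{4k-2}^\Spin(B\SO_3) \otimes \Z[1/2] = 0$. Inverting $2$ identifies spin and oriented bordism, and $\Omega_*^\SO \otimes \Z[1/2]$ is a free $\Z[1/2]$-module concentrated in degrees divisible by $4$. Since $H^*(B\SO_3;\Z[1/2]) = \Z[1/2][p_1]$ with $\abs{p_1} = 4$, the $\Z[1/2]$-homology of $B\SO_3$ is also concentrated in degrees divisible by $4$. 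The Atiyah--Hirzebruch spectral sequence then forces $\Omega_n^\Spin(B\SO_3) \otimes \Z[1/2]$ to vanish unless $n$ is divisible by $4$; in particular the two relevant groups vanish and exactness gives the isomorphism.

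The principal obstacle is the verification in paragraph two that the Smith map $p_*$ agrees with $\iota_*$: one must reconcile the sphere-bundle map $B\SO_2 \hookrightarrow B\SO_3$ with the quotient $\Spin^c/\Spin = \U_1 \to \SO_3 = \Spin^h/\Spin$ coming from $\iota$. Once it is checked that both of these classifying-space maps correspond to the standard inclusion $\SO_2 \hookrightarrow \SO_3$, the remainder of the argument assembles routinely from \cref{bordism_LES_cor,relative_Thom} and standard facts about $H^*(B\SO_3)$ and $\Omega_*^\SO$.
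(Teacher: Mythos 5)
Your proposal is correct and follows essentially the same route as the paper: both set up the Smith long exact sequence with $X = B\SO_3$, $W = V_{\SO_3}$, identify the three terms as $\Omega_*^{\Spin^c}$, $\Omega_*^{\Spin^h}$, and $\Omega_{*-3}^{\Spin}(B\SO_3)$ (using the sphere-bundle lemma and the relative Thom isomorphism since $2V_{\SO_3}$ is spin), and conclude part~(2) by observing that $\Omega_*^\Spin(B\SO_3)\otimes\Z[1/2]$ is concentrated in degrees $\equiv 0 \bmod 4$. The step you flag as the ``principal obstacle'' is handled in the paper by \cref{sphere_bundle_lemma} together with~\ref{LES_map_p}, and your sketch of it is right: the double cover $\SU_2 \to \SO_3$ carries the standard $\U_1 \subset \SU_2$ onto $\SO_2 \subset \SO_3$, so the classifying map from $\iota$ is the standard inclusion $B\SO_2 \hookrightarrow B\SO_3$, exactly as the sphere-bundle map.
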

In particular, part~\eqref{10.3_Z12} follows from~\eqref{10.3_Smith} and a few computations in the literature. In light of the explicit interpretations of the maps in a Smith long exact sequence in~\ref{LES_map_p}--\ref{final_LES}, we believe \cref{thm_answer_to_10.3} provides a geometric answer to the first part of \cref{spinc_spinh_ranks}.

Consider the Smith map of spectra \eqref{Thom_spectra_Smith} for $X=B\SO_3$ and $V=W=V_{\SO_3}$, where $V_{\SO_3}$ is the tautological rank-three oriented bundle:
\begin{equation}\label{rational_Smith_iso_spectra}
    \mathrm{sm}_{V_{\SO_3}}\colon (B\SO_3)^{V_{\SO_3}}\to (B\SO_3)^{2V_{\SO_3}}
\end{equation}
By \cref{the_cofiber_sequence}, the fiber is given by the Thom spectrum over the sphere bundle: $S(V_{\SO_3})^{p^*V_{\SO_3}}$.
\begin{lem}\label{sphere_bundle_lemma}
For all $n\ge 1$, there is a homotopy equivalence $\varphi\colon S(V|_{\SO_n})\xrightarrow{\simeq} B\SO_{n-1}$, and $\varphi$ identifies the bundle map $p\colon S(V|_{\SO_n})\to B\SO_n$ with the map $B\SO_{n-1}\to B\SO_n$ induced by the standard inclusion $\SO_{n-1}\hookrightarrow\SO_n$, up to homotopy.
\end{lem}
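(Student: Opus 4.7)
The plan is to model the tautological bundle $V_{\SO_n}\to B\SO_n$ as an associated bundle and read off the sphere bundle as a homogeneous space, recognizing it as $B\SO_{n-1}$. Concretely, I would first fix a model $V_{\SO_n}=E\SO_n\times_{\SO_n}\R^n$, where $\SO_n$ acts on $\R^n$ by the defining representation. The sphere bundle is then
\begin{equation}
    S(V_{\SO_n})\;=\;E\SO_n\times_{\SO_n}S^{n-1}.
\end{equation}

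Next I would use the fact that $\SO_n$ acts transitively on $S^{n-1}$, with isotropy group at $e_n\in S^{n-1}$ equal to $\SO_{n-1}$ embedded in the standard way; consequently $S^{n-1}\cong \SO_n/\SO_{n-1}$ as $\SO_n$-spaces. Substituting gives a canonical homeomorphism
\begin{equation}
    S(V_{\SO_n})\;\cong\;E\SO_n\times_{\SO_n}(\SO_n/\SO_{n-1})\;\cong\;E\SO_n/\SO_{n-1}.
\end{equation}
Since $E\SO_n$ is contractible and the free $\SO_n$-action restricts to a free $\SO_{n-1}$-action, the space $E\SO_n/\SO_{n-1}$ is a model for $B\SO_{n-1}$; call the resulting weak equivalence $\varphi$. (One may alternatively deduce $\varphi$ from the Serre spectral sequence of $S^{n-1}\to S(V_{\SO_n})\to B\SO_n$ compared with the fibration $S^{n-1}\to B\SO_{n-1}\to B\SO_n$, but the associated-bundle model is cleaner.)

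For the second claim, the bundle projection $p\colon S(V_{\SO_n})\to B\SO_n$ becomes, under $\varphi$, the quotient map $E\SO_n/\SO_{n-1}\to E\SO_n/\SO_n$. This is precisely the standard model for the map $B\SO_{n-1}\to B\SO_n$ induced by the inclusion $\SO_{n-1}\hookrightarrow\SO_n$, because applying the classifying space functor to a subgroup inclusion can be realized by dividing a contractible free $G$-space by the subgroup first and then by the full group. Hence $p$ and $B(\SO_{n-1}\hookrightarrow\SO_n)$ are homotopic, as desired.

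The only subtle point is checking that the $\SO_{n-1}$-action on $E\SO_n$ is free (it is, since the $\SO_n$-action is) and that the resulting quotient realizes $B\SO_{n-1}$ in a way compatible with the standard inclusion; both are routine once the associated-bundle picture is set up. So there is no serious obstacle—the lemma is really just the homotopy-theoretic translation of the homogeneous-space identification $S^{n-1}=\SO_n/\SO_{n-1}$.
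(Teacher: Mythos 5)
Your proof is correct and is the standard argument. The paper itself does not prove the lemma inline but instead cites~\cite[Example~7.57]{DDKLPTT24}; the associated-bundle identification $S(V_{\SO_n})\cong E\SO_n\times_{\SO_n}(\SO_n/\SO_{n-1})\cong E\SO_n/\SO_{n-1}$ together with the observation that $E\SO_n$ is a contractible free $\SO_{n-1}$-space is precisely the usual proof of this well-known fact, and your identification of $p$ with the quotient map $E\SO_n/\SO_{n-1}\to E\SO_n/\SO_n$ correctly recovers $B(\SO_{n-1}\hookrightarrow\SO_n)$.

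One small remark: the homogeneous-space identification $S^{n-1}\cong\SO_n/\SO_{n-1}$ relies on $\SO_n$ acting transitively on $S^{n-1}$, which requires $n\ge 2$; for $n=1$ the statement degenerates ($S^0$ has two points while $B\SO_0$ is a point). This is an edge case in the lemma as stated in the paper rather than a gap in your argument, and it is immaterial to the application, which only uses $n=3$.
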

This is well-known; see~\cite[Example 7.57]{DDKLPTT24} for a proof. Because of \cref{sphere_bundle_lemma}, we will also write $p$ for the map $B\U_1\to B\SO_3$ induced by the standard inclusion $\U_1\cong\SO_2\hookrightarrow\SO_3$.
%
Then, the pullback $p^*V_{\SO_3}$ to $B\SO_2$ is the rank-two tautological bundle over $B\SO_2$ plus a trivial real line bundle, and under the equivalence $\SO_2\cong \U_1$, the tautological rank-two oriented bundle $V_{\SO_2}\to B\SO_2$ is identified with the tautological complex line bundle $V_{\U_1}\to B\U_1$.\footnote{One way to see this is that these two vector bundles are induced from the defining representations $\SO_2\to\GL_2(\R)$, resp.\ $\U_1\to\GL_1(\C)$, and that the standard isomorphism $\C\cong\R^2$ induces an isomorphism of these two representations, hence also of their associated bundles.}
Overall, we have argued an equivalence
\begin{equation}
    S(V_{\SO_3})^{p^* V_{\SO_3}} \simeq (B\U_1)^{V_{\U_1}\oplus\,\underline\R}\simeq \Sigma (B\U_1)^{V_{\U_1}}.
\end{equation}
To study spin bordism, we smash the fiber sequence for $\sm_{V_{\SO_3}}$ with $MT\Spin$: 
\begin{equation}
    MT\Spin\wedge \Sigma (B\U_1)^{V_{\U_1}} \to MT\Spin \wedge (B\SO_3)^{V_{\SO_3}} \xrightarrow{\mathrm{sm}_{V_{\SO_3}}} MT\Spin\wedge (B\SO_3)^{2V_{\SO_3}}.
\end{equation}
Under shearing, this sequence becomes more familiar. Using \cref{spinc_tw_spin} and \cref{shearing_spectra}, we may recast the first spectrum as $\Sigma^3 MT\Spinc$. By \cref{spinh_tw_spin}, the second spectrum becomes $\Sigma^3 MT\Spinh$. Finally, the third spectrum represents $(B\SO_3,2V_{\SO_3})$-twisted spin bordism, but this twist is actually this is no twist at all: since $2V_{\SO_3}$ is spin, this spectrum reduces to $\Sigma^6 MT\Spin\wedge (B\SO_3)_+$ by \cref{relative_Thom}.

Altogether, after desuspending thrice, we have a fiber sequence of spectra\footnote{This fiber sequence and its corresponding Smith homomorphism also appears in~\cite[Example 7.45 and Appendix B]{DDKLPTT24}; it has the interesting property that the Smith homomorphism cannot be defined using ordinary cohomology: one must take the Poincaré dual submanifold in $\mathit{ko}$-cohomology or in spin cobordism.}
\begin{equation}
    MT\Spinc \overset{p}{\longrightarrow} MT\Spinh \xrightarrow{\mathrm{sm}_{V_{\SO_3}}} \Sigma^3 MT\Spin\wedge (B\SO_3)_+.
\end{equation}
The associated Smith long exact sequence is
\begin{equation}\label{spinc_h_smith}
    \dots \longrightarrow \Omega^{\Spinc}_n \overset{p_*}{\longrightarrow} \Omega^{\Spinh}_n \xrightarrow{\sm_{V_{\SO_3}}} \Omega_{n-3}^\Spin(B\SO_3) \overset\partial\longrightarrow
    \Omega^{\Spin^c}_{n-1}\longrightarrow\cdots
\end{equation}
\Cref{sphere_bundle_lemma} and~\ref{LES_map_p} imply that $p_*$ is the map taking the induced \spinh structure of a \spinc manifold, proving the first part of \cref{thm_answer_to_10.3}.

We are interested in~\eqref{spinc_h_smith} in degrees $i=4k$ after inverting $2$.
\begin{lem}\label{rational_where}\hfill
\begin{enumerate}
    \item\label{spinc_12} $\Omega_*^{\Spin^c}\otimes\Z[1/2]$ is concentrated in even degrees.
    \item\label{spinh_12} $\Omega_*^{\Spin^h}\otimes\Z[1/2]$ is concentrated in degrees $0\bmod 4$.
    \item\label{spinso3_12} $\Omega_*^\Spin(B\SO_3)\otimes\Z[1/2]$ is concentrated in degrees $0\bmod 4$.
\end{enumerate}
\end{lem}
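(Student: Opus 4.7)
The plan is to prove all three parts using the same tool, the Atiyah-Hirzebruch spectral sequence (AHSS) with $\Z[1/2]$ coefficients, built on the classical input that $\Omega_*^{\Spin}\otimes\Z[1/2]$ is concentrated in degrees divisible by $4$. I would justify this input by recalling that the map $B\Spin\to B\SO$ has $2$-torsion fiber $K(\Z/2,1)$, so $MT\Spin\wedge\Z[1/2]\simeq MT\SO\wedge\Z[1/2]$, and $\Omega_*^{\SO}\otimes\Z[1/2]$ is polynomial on generators in degrees $4,8,12,\dots$ such as $[\CP^{2k}]$.

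For part (1), I would shear via \cref{shearing_spectra} to rewrite $\Omega_n^{\Spin^c}\otimes\Z[1/2]\cong \Omega_n^{\Spin}((B\U_1)^{V_{\U_1}-2})\otimes\Z[1/2]$. Since $V_{\U_1}-2$ is an oriented virtual bundle of rank $0$, the Thom isomorphism gives $H_*((B\U_1)^{V_{\U_1}-2};\Z[1/2])\cong H_*(B\U_1;\Z[1/2])$, which is concentrated in even degrees. The AHSS
\begin{equation*}
E^2_{p,q}=H_p\bigl((B\U_1)^{V_{\U_1}-2};\Omega_q^{\Spin}\otimes\Z[1/2]\bigr)\Longrightarrow \Omega_{p+q}^{\Spin^c}\otimes\Z[1/2]
\end{equation*}
then has $E^2_{p,q}=0$ unless $p$ is even and $q\equiv 0\pmod 4$, so $p+q$ is always even. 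Since $d_r$ shifts total degree by $-1$, every differential lands in a zero group, forcing $E^\infty=E^2$ and concentration in even degrees.

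Part (2) follows by the same recipe after shearing to $\Omega_n^{\Spin^h}\otimes\Z[1/2]\cong \Omega_n^{\Spin}((B\SO_3)^{V_{\SO_3}-3})\otimes\Z[1/2]$, provided one knows $H_*(B\SO_3;\Z[1/2])$ is concentrated in degrees divisible by $4$. This is standard: $H^*(B\SO_3;\Q)\cong \Q[p_1]$ with $|p_1|=4$, and the integral torsion in $H^*(B\SO_3;\Z)$ is entirely $2$-primary (the generator is $\beta(w_2)$, in degree $3$), so inverting $2$ leaves $\Z[1/2][p_1]$. Applying Thom and AHSS as above, both $p$ and $q$ on the $E^2$-page are divisible by $4$, giving concentration of $\Omega_*^{\Spin^h}\otimes\Z[1/2]$ in degrees $\equiv 0 \pmod 4$. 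Part (3) is then the most direct: run the AHSS for $\Omega_*^{\Spin}(B\SO_3)\otimes\Z[1/2]$ using exactly the same homology input.

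I do not anticipate a genuinely hard step; the argument is essentially parity-and-divisibility bookkeeping, with shearing plus the Thom isomorphism serving as the bridge from \spinc and \spinh bordism down to AHSS on $B\U_1$ or $B\SO_3$. The only item requiring a moment of care is the Thom isomorphism with $\Z[1/2]$ coefficients for the oriented virtual rank-zero bundles $V_{\U_1}-2$ and $V_{\SO_3}-3$, and the identification of the $2$-inverted cohomology of $B\SO_3$; after that, everything is routine.
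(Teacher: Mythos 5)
Your proposal is correct, and it takes a genuinely different (and more uniform) route for parts (1) and (2) than the paper does. The paper cites Stong~\cite[Ch.\ XI, p.~349]{Sto68} for part~\eqref{spinc_12} and, for part~\eqref{spinh_12}, uses Hu's equivalence $\Omega_*^{\Spin^h}\otimes\Z[1/2]\cong\Omega_*^\Spin\otimes H_*(B\SU_2;\Z[1/2])$~\cite[Remark A.2]{Hu23} combined with the degree concentration of each tensor factor; only part~\eqref{spinso3_12} is proved via the Atiyah--Hirzebruch spectral sequence with $\Z[1/2]$ coefficients. You instead shear via \cref{shearing_spectra} to realize \spinc and \spinh bordism as the spin bordism of $(B\U_1)^{V_{\U_1}-2}$ and $(B\SO_3)^{V_{\SO_3}-3}$, invoke the Thom isomorphism for these oriented rank-zero virtual bundles to transport the problem to ordinary homology of $B\U_1$ and $B\SO_3$, and then run the same AHSS bookkeeping that the paper uses only for part~\eqref{spinso3_12}. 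This buys you a single self-contained argument in place of two external citations, at the small cost of two extra applications of the Thom isomorphism; your supporting facts ($V_{\U_1}$ is complex hence oriented, $V_{\SO_3}$ is oriented, $H^*(B\SO_3;\Z)$ has only $2$-primary torsion so $H^*(B\SO_3;\Z[1/2])\cong\Z[1/2][p_1]$, and $MT\Spin\wedge S\Z[1/2]\simeq MT\SO\wedge S\Z[1/2]$ with $\Omega_*^{\SO}\otimes\Z[1/2]$ a polynomial ring on degree-$4k$ generators by Wall) are all correct and standard. Both approaches are valid; yours is closer in spirit to the paper's treatment of part~\eqref{spinso3_12} and avoids invoking the $\SU_2$-level statement from~\cite{Hu23}.
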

\begin{proof}
\eqref{spinc_12} is in Stong~\cite[Chapter XI, p.\ 349]{Sto68}. For~\eqref{spinh_12}, use the equivalence $\Omega_*^{\Spin^h}\otimes\Z[1/2]\cong\Omega_*^\Spin\otimes H_*(B\SU_2;\Z[1/2])$~\cite[Remark A.2]{Hu23} together with the fact that both $\Omega_*^\Spin\otimes\Z[1/2]$ and $H_*(B\SU_2;\Z)$ are concentrated in degrees $0\bmod 4$ (\cite{anderson_structure_1967}, resp.\ \cite[\S 29]{Bor53a}); use the universal coefficient theorem to get to $H_*(B\SU_2;\Z[1/2])$ and thus to $\Omega_*^\Spin\otimes\Z[1/2]$.

For~\eqref{spinso3_12},
 %
use the Atiyah-Hirzebruch spectral sequence of signature
\begin{equation}
E^2_{p,q} = H_p(B\SO_3; \Omega_q^\Spin\otimes \Z[1/2]) \implies \Omega^{\Spin}_{p+q}(B\SO_3)\otimes \Z[1/2]
\end{equation}
    to compute. As noted above, spin bordism tensored with $\Z[1/2]$ is concentrated in degrees $0\bmod 4$, and $H_*(B\SO_3;\Z[1/2])\cong \Z[1/2, p_1]$ is concentrated in degrees divisible by $4$ as well~\cite[\S 29]{Bor53a}, so the spectral sequence collapses on $E_2$ and the result is also concentrated in degrees $0\bmod 4$.  
\end{proof}
\begin{thm}\label{generator_cor}
For all $k\ge 0$, the map
\begin{equation}
    p_*\colon\Omega_{4k}^\Spinc \longrightarrow \Omega_{4k}^\Spinh
\end{equation}
defined by taking the induced \spinh structure
is an isomorphism after tensoring with $\Z[1/2]$.
\end{thm}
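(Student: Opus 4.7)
The plan is to derive \cref{generator_cor} as an immediate consequence of the long exact sequence~\eqref{spinc_h_smith} together with the concentration statements of \cref{rational_where}. Since tensoring with the flat $\Z$-algebra $\Z[1/2]$ is exact, the sequence~\eqref{spinc_h_smith} remains exact after $\otimes\Z[1/2]$.

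First, I would write down the six-term piece of the sequence surrounding degree $4k$ after tensoring with $\Z[1/2]$:
\begin{equation}
\Omega_{4k+1}^\Spinh\otimes\Z[1/2] \xrightarrow{\sm_{V_{\SO_3}}} \Omega_{4k-2}^\Spin(B\SO_3)\otimes\Z[1/2] \xrightarrow{\partial} \Omega_{4k}^\Spinc\otimes\Z[1/2] \xrightarrow{p_*} \Omega_{4k}^\Spinh\otimes\Z[1/2] \xrightarrow{\sm_{V_{\SO_3}}} \Omega_{4k-3}^\Spin(B\SO_3)\otimes\Z[1/2].
\end{equation}
Next, I would invoke \cref{rational_where} to kill the outer three terms: part~\eqref{spinh_12} says $\Omega_{4k+1}^\Spinh\otimes\Z[1/2]=0$ because $4k+1$ is not divisible by $4$; part~\eqref{spinso3_12} says that both $\Omega_{4k-2}^\Spin(B\SO_3)\otimes\Z[1/2]$ and $\Omega_{4k-3}^\Spin(B\SO_3)\otimes\Z[1/2]$ vanish because neither $4k-2$ nor $4k-3$ is divisible by $4$. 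Exactness of
\begin{equation}
0 \longrightarrow \Omega_{4k}^\Spinc\otimes\Z[1/2] \xrightarrow{p_*} \Omega_{4k}^\Spinh\otimes\Z[1/2] \longrightarrow 0
\end{equation}
then yields the theorem.

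There is no serious obstacle: the only potentially delicate point is being careful about which degrees of $\Omega_*^\Spin(B\SO_3)\otimes\Z[1/2]$ flank $\Omega_{4k}^\Spinc\otimes\Z[1/2]$ and $\Omega_{4k}^\Spinh\otimes\Z[1/2]$ in the long exact sequence, but a direct bookkeeping check with the indexing from~\eqref{spinc_h_smith} (namely that $\partial$ decreases degree by $1$ and $\sm_{V_{\SO_3}}$ decreases degree by $3$) confirms that the neighboring terms both sit in degrees $\not\equiv 0\pmod 4$. The geometric interpretation of $p_*$ as induction of \spinh structures from \spinc structures has already been established via~\ref{LES_map_p} and \cref{sphere_bundle_lemma} in the discussion preceding the theorem, so no further identification is required.
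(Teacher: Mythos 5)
Your proposal is correct and follows the paper's proof exactly: tensor the Smith long exact sequence~\eqref{spinc_h_smith} with the flat module $\Z[1/2]$, then use the degree concentration results in \cref{rational_where} to kill the flanking terms and extract the isomorphism. The paper's version is simply terser; your bookkeeping of the indices is accurate (though the vanishing of $\Omega_{4k+1}^{\Spin^h}\otimes\Z[1/2]$ is not actually needed, only the two $B\SO_3$ terms).
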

\begin{proof}
We discussed the interpretation of $p_*$ as taking the induced \spinh structure right after~\eqref{spinc_h_smith}, so all that remains is the isomorphism away from $2$. Tensor the long exact sequence~\eqref{spinc_h_smith} with $\Z[1/2]$; since $\Z[1/2]$ is a flat $\Z$-module, the resulting sequence is still exact. Then plug in \cref{rational_where} and conclude.
%
\end{proof}
This finishes the proof of \cref{thm_answer_to_10.3}.



\newcommand{\etalchar}[1]{$^{#1}$}

\end{document}